\theoremstyle{plain}
\newtheorem{theorem}{Theorem}[section]
\newtheorem{lemma}[theorem]{Lemma}
\newtheorem{corollary}[theorem]{Corollary}
\theoremstyle{definition}
\newtheorem{example}[theorem]{Example}
\newtheorem{question}[theorem]{Question}
\newcommand{\R}{\mathbb{R}}
\newcommand{\Z}{\mathbb{Z}}
\begin{document}

\title[Splitting loops and necklaces]{Splitting loops and necklaces: \\ Variants of the square peg problem}



\author[J. Aslam, S. Chen, F. Frick, S. Saloff-Coste, L. Setiabrata, H. Thomas]{Jai Aslam, Shujian Chen, Florian Frick, Sam Saloff-Coste, \\ Linus Setiabrata, and Hugh Thomas}

\address[JA, SC]{Dept.\ Math., Northeastern University, Boston, MA 02115, USA}
\email{\{aslam.j, chen.shuj\}@husky.neu.edu}

\address[FF, SSC, LS]{Dept.\ Math., Cornell University, Ithaca, NY 14853, USA}
\email{\{ff238, sps247, ls823\}@cornell.edu} 

\address[HT]{Math.\ Dept., Universit\'e du Qu\'ebec \'a Montr\'eal, Canada}
\email{thomas.hugh\_r@uqam.ca}

\date{\today}
\maketitle


\begin{abstract}
\small
Toeplitz conjectured that any simple planar loop inscribes a square. Here we prove variants of Toeplitz' square peg problem.
We prove Hadwiger's 1971 conjecture that any simple loop in $3$-space inscribes a parallelogram. We show that any simple 
planar loop inscribes sufficiently many rectangles that their vertices are dense in the loop (independently due to Schwartz). 
If the loop is rectifiable, there is a rectangle that cuts the loop into four pieces that can be rearranged to
form two loops of equal length. A rectifiable loop in $d$-space can be cut into $(r-1)(d+1)+1$ pieces that can be rearranged 
by translations to form $r$ loops of equal length. We relate our results to fair divisions of necklaces in the sense of Alon and 
to Tverberg-type results. This provides a new approach and a common framework to obtain variants of Toeplitz' square peg 
problem for the class of all continuous curves.
\end{abstract}

\section{Introduction}

Toeplitz~\cite{toeplitz1911} conjectured that an embedded continuous closed curve (a \emph{loop}) in the plane \emph{inscribes} a square,
that is, it contains the four vertices of a square. This conjecture has been settled in several special cases,
such as piecewise analytic curves (Emch~\cite{emch1916}), $C^2$ curves (Schnirelman~\cite{schnirelman1944}, see also 
Guggenheimer~\cite{guggenheimer1965}), $C^1$ curves (Stromquist~\cite{stromquist1989}), or homotopically nontrivial loops 
contained in certain annuli, and an open and dense class of curves (Matschke~\cite{matschke2011}); also see Matschke's survey~\cite{matschke2014}.
Recently, Tao~\cite{tao2017} provided a novel approach to Toeplitz' conjecture proving it for curves that arise as the union of two 
graphs of Lipschitz functions with Lipschitz constant less than one. Results for the class of all continuous closed curves are rare. 
It seems that the most general statements towards Toeplitz' conjecture are that any loop inscribes a rhombus with two sides
parallel to a given line (see Nielsen~\cite{nielsen1995}) and that any loop inscribes a rectangle; this was proven by Vaughan, and the 
proof appears in Meyerson's manuscript~\cite{meyerson1981}. See also Pak's book~\cite[Prop. 5.4]{pak2010} and Schwartz' recent 
trichotomy of inscribed rectangles~\cite{schwartz2018}. For additional very recent progress on special inscribed quadrilaterals 
see~\cite{akopyan2017, hugelmeyer2018, matschke2018}.

Nielsen's result proceeds by approximating continuous curves by piecewise linear curves while certifying that the rhombus
does not degenerate in this process. Similarly, Schwartz approximates loops by generic polygons. 
Vaughan's result is particular to the case of inscribed rectangles and does not lend
itself easily to proving variants. Here we describe a novel technique that proves relatives of Toeplitz' conjecture for all continuous
curves in the same generalized fashion without a need for approximation. 

An important variant of the square peg problem is a 1971 conjecture of Hadwiger~\cite{hadwiger1971} that states that any loop in $\R^3$
inscribes a parallelogram. 
Guggenheimer~\cite{guggenheimer1974} established this for $C^2$ curves and Makeev~\cite{makeev2005}
for $C^1$ curves. Vre\'cica and \v Zivaljevi\'c~\cite{vrecica2011-2} develop a general proof method that also yields Hadwiger's conjecture
for $C^1$ curves. In fact, all of these results establish the existence of an inscribed rhombus.

We relate inscribing special $n$-gons into loops to results of fair division on the real line, such as the Hobby--Rice theorem in
$L^1$ approximation (see Theorem~\ref{thm:hobby-rice}) as well as its generalizations. We prove the following results:
\begin{compactitem}
	\item Hadwiger's conjecture holds: any simple loop in~$\R^3$ inscribes a parallelogram. In fact, it inscribes so many parallelograms
		that the set of vertices is dense in the loop; see Theorem~\ref{thm:hadwiger}. Here we allow parallelograms that consist of four
		pairwise distinct points on a line and that are the limit of a sequence of parallelograms (so does Hadwiger).
	\item Any simple planar loop inscribes sufficiently many rectangles that the set of vertices is dense in the loop; see Theorem~\ref{thm:many-rectangles}.
		Schwartz~\cite{schwartz2018} recently and independently proved that all but at most four points of a loop are the vertices of a rectangle.
	\item Any rectifiable simple planar loop inscribes a rectangle that cuts the loop into four parts $\gamma^{(1)}$, $\gamma^{(2)}$, 
		$\gamma^{(3)}$, $\gamma^{(4)}$ in
		cyclic order such that the total length of $\gamma^{(1)}$ and $\gamma^{(3)}$ is equal to the total length of $\gamma^{(2)}$ and~$\gamma^{(4)}$; 
		see Theorem~\ref{thm:inscribed}.
	\item Any rectifiable loop in $\R^d$ can be cut into $(r-1)(d+1)+1$ pieces that may be rearranged by translations to form $r$ loops of equal length;
		see Theorem~\ref{thm:splitting-loops}.
	\item We prove a proper extension of Alon's necklace splitting result~\cite{alon1987} for divisions of the unit interval into a prime number of parts
		by applying the topological machinery of the optimal colored Tverberg theorem of Blagojevi\'c, Matschke, and Ziegler~\cite{blagojevic2015}; 
		see Theorem~\ref{thm:opt-splitting-necklaces}. This allows us to prove a proper strengthening of Theorem~\ref{thm:splitting-loops}
		for primes $r$; see Corollary~\ref{cor:opt-splitting-loops}.
\end{compactitem}

\section{Inscribing parallelograms and rectangles}
\label{sec:rectangles}

We find it instructive to first discuss why any planar $C^1$ loop inscribes a parallelogram with a prescribed vertex. 
This result follows easily from the Hobby--Rice theorem below. After deducing this special case, we will discuss how 
to obtain generalizations.

\begin{theorem}[Hobby and Rice~\cite{hobby1965}]
\label{thm:hobby-rice}
	Let $\mu$ be a finite nonatomic real measure on~$[0,1]$. Let ${f_i \colon [0,1] \longrightarrow \R}$, $ i =1, \dots, n$,
	be functions in~$L^1(d\mu)$. Then there are points $t_i$ with $0 = t_0 \le t_1 \le \dots \le t_n \le t_{n+1} = 1$
	such that
	$$\sum_{j=1}^{n+1} (-1)^j \int_{t_{j-1}}^{t_j} f_i(t) \ d\mu(t) = 0 \quad \text{for every} \ i =1, \dots, n.$$
\end{theorem}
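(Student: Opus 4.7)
The plan is to reduce the theorem to the Borsuk--Ulam theorem by parametrizing signed partitions of $[0,1]$ into at most $n+1$ parts by the sphere $S^n$. Concretely, for $x = (x_1, \ldots, x_{n+1}) \in S^n$ I would set $t_0(x) = 0$ and $t_j(x) = x_1^2 + \cdots + x_j^2$ for $j = 1, \ldots, n+1$, so that $0 = t_0 \le t_1 \le \cdots \le t_n \le t_{n+1} = 1$ forms a (possibly degenerate) partition of $[0,1]$; to the $j$-th subinterval I attach the sign $\epsilon_j(x) = \mathrm{sign}(x_j) \in \{-1, 0, +1\}$.

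Then I would define $F \colon S^n \to \R^n$ by
\[
F_i(x) = \sum_{j=1}^{n+1} \epsilon_j(x) \int_{t_{j-1}(x)}^{t_j(x)} f_i(t) \, d\mu(t), \qquad i = 1, \ldots, n,
\]
and observe that $F$ is antipodal: replacing $x$ by $-x$ leaves each $x_j^2$ unchanged, so the underlying partition is preserved, while every sign $\epsilon_j$ flips. Hence $F(-x) = -F(x)$, and the Borsuk--Ulam theorem supplies some $x^* \in S^n$ with $F(x^*) = 0$.

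The main technical obstacle is verifying continuity of $F$, since $\epsilon_j(x)$ jumps whenever $x_j$ crosses zero. The resolution is that at any such $x$ the $j$-th interval $[t_{j-1}(x), t_j(x)]$ has zero length, so by nonatomicity of $\mu$ and the assumption $f_i \in L^1(d\mu)$ the corresponding summand tends to $0$ as $x_j \to 0$ from either side; a short absolute-continuity or dominated-convergence argument makes this rigorous, and the remaining summands depend continuously on $x$ by the same token.

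Finally I would convert the relation $F(x^*) = 0$, in which the signs $\epsilon_j(x^*)$ need not alternate, into the alternating form $(-1)^j$ demanded by the statement. Adjacent subintervals carrying equal signs can be coalesced, since their signed integrals combine without changing the value of $F$; any resulting shortfall of cut points is then absorbed by inserting degenerate intervals of the form $t_{j-1} = t_j$ with the appropriate $(-1)^j$ sign, which contribute nothing. This yields a subdivision $0 = t_0 \le t_1 \le \cdots \le t_n \le t_{n+1} = 1$ with exactly $n+1$ alternately signed parts satisfying the desired identity.
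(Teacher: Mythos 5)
Your proof is correct, and it is essentially the $r=2$ specialization of the argument the paper actually carries out. The paper states Theorem~\ref{thm:hobby-rice} without proof (citing Hobby--Rice) and instead proves the generalization Theorem~\ref{thm:splitting-necklaces} by parametrizing subdivisions-with-partition by points of the deleted join $(\Delta_n)^{*r}_\Delta$ and applying Dold's theorem to an $S_r$-equivariant test map into $W_r^{\oplus m}$; for $r=2$ the deleted join $(\Delta_n)^{*2}_\Delta$ is an $n$-sphere, $W_2^{\oplus n}\cong\R^n$, Dold reduces to Borsuk--Ulam, and the resulting map is precisely your $F$ under the standard identification $t_j = x_1^2+\cdots+x_j^2$, $\epsilon_j=\operatorname{sign}(x_j)$. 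Your handling of continuity (nonatomicity makes $a\mapsto\int_0^a f_i\,d\mu$ continuous, so summands with vanishing $\epsilon_j$ vanish in the limit) and the final coalescing step to force alternating signs (padding with degenerate intervals and, if need be, negating both sides) are both sound and mirror the paper's remarks about how points in the geometric realization correspond to cuts and parts.
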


Let $\gamma\colon [0,1] \longrightarrow \R^2, t \mapsto (\gamma_1(t), \gamma_2(t))$ be a $C^1$ loop in the plane.
We note that $$\int_0^1 \gamma'_i(t) \ dt = \gamma_i(1) - \gamma_i(0) = 0 \quad \text{for} \ i =1,2.$$
The Hobby--Rice theorem implies that there are three points $0 \le a \le b \le c \le 1$ such that 
\begin{equation}
\label{eq:der}
	\int_0^a |\gamma'(t)| \ dt + \int_b^c |\gamma'(t)| \ dt = \int_a^b |\gamma'(t)| \ dt + \int_c^1 |\gamma'(t)| \ dt
\end{equation}
and
$$\int_0^a \gamma'_i(t) \ dt + \int_b^c \gamma'_i(t) \ dt = \int_a^b \gamma'_i(t) \ dt + \int_c^1 \gamma'_i(t) \ dt,$$
which implies that both sides of this latter equation vanish. This implies that $\gamma(a) - \gamma(0) = \gamma(b) - \gamma(c)$
and $\gamma(b) - \gamma(a) = \gamma(c) - \gamma(1)$. This implies that the points $\gamma(0), \gamma(a), \gamma(b)$, 
and $\gamma(c)$ describe a parallelogram inscribed into~$\gamma$, where the vertex $\gamma(0)$ was prescribed in advance.
Equation~(\ref{eq:der}) ensures that the parallelogram is non-degenerate.

The requirement that $\gamma$ be continuously differentiable may be relaxed to $\gamma$ being continuous since we 
differentiate $\gamma$ and then integrate again. This will require a slight extension of the Hobby--Rice theorem. In fact,
we will immediately prove a version that instead of splitting $[0,1]$ into positive and negative subintervals, splits a partition
of $[0,1]$ into $r$ parts that equalize given functions on the intervals of each part. One such extension of the Hobby--Rice theorem is due
to Alon~\cite{alon1987}. The theorem below is a slight modification, but can be proven in a similar way. We also refer to
the statement and proof in Matou\v sek's book~\cite{matousek2008}.

\begin{theorem}
\label{thm:splitting-necklaces}
	Let $f_1, \dots, f_m\colon [0,1] \longrightarrow \R$ be continuous functions. Let $r \ge 2$ be an integer, and set~${n = (r-1)m}$.
	Then there are points $0 = t_0 \le t_1 \le \dots \le t_{n+1} = 1$ and a partition of the set $[n+1]$ into subsets $T_1, \dots, T_r$ such that
	$$\sum_{j\in T_1} f_k(t_j) - f_k(t_{j-1}) = \sum_{j\in T_2} f_k(t_j) - f_k(t_{j-1}) =  \dots = \sum_{j\in T_r} f_k(t_j) - f_k(t_{j-1}), \quad k = 1, \dots, m.$$
\end{theorem}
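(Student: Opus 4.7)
The plan is to prove this by a standard equivariant topology argument modeled on Alon's original necklace-splitting proof, handling prime $r$ via a generalized Borsuk--Ulam theorem and reducing composite $r$ to the prime case by iteration.

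For the prime case, I would first parametrize all labeled subdivisions of $[0,1]$ by the $(n+1)$-fold join $K = \underbrace{W_r \ast W_r \ast \cdots \ast W_r}_{n+1}$ of the discrete $r$-point space $W_r = \{1, \dots, r\}$. A point of $K$ is a formal convex combination $\sum_{j=1}^{n+1} \lambda_j w_j$ with $\lambda_j \ge 0$, $\sum_j \lambda_j = 1$, and $w_j \in W_r$ (with $w_j$ irrelevant when $\lambda_j = 0$); setting $t_j := \sum_{i \le j} \lambda_i$ recovers a sequence $0 = t_0 \le t_1 \le \cdots \le t_{n+1} = 1$, and $w_j$ records the label of the subinterval $[t_{j-1}, t_j]$. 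The symmetric group $S_r$ acts on $K$ by permuting the $r$ labels, and for prime $r$ the cyclic subgroup $\Z/r$ acts freely because no nontrivial cyclic permutation fixes any element of $W_r$.

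Next I would introduce the $S_r$-equivariant test map $\Phi\colon K \longrightarrow V$, where $V = \bigl\{\, v \in (\R^m)^r : \sum_l v_l = 0 \,\bigr\} \cong \R^n$, by
\[
	\Phi(x)_{l,k} \;=\; \sum_{j:\, w_j = l} \bigl(f_k(t_j) - f_k(t_{j-1})\bigr) \;-\; \tfrac{1}{r}\bigl(f_k(1) - f_k(0)\bigr).
\]
This is well-defined and continuous, since whenever $\lambda_j = 0$ the summand $f_k(t_j) - f_k(t_{j-1})$ vanishes independently of $w_j$, so $\Phi$ respects the identifications defining the join. A zero of $\Phi$ is exactly a labeled partition as claimed in the theorem. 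Now $K$ is $(n-1)$-connected and $n$-dimensional (in fact homotopy equivalent to a wedge of $n$-spheres), while $V \setminus \{0\}$ is $\Z/r$-equivariantly homotopy equivalent to $S^{n-1}$ with free $\Z/r$-action (the only cyclic fixed point of $V$ is $0$). Were $\Phi$ nowhere zero we would obtain a $\Z/r$-equivariant map $K \longrightarrow S^{n-1}$ between free $\Z/r$-spaces, contradicting the generalized Borsuk--Ulam theorem of Dold.

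For composite $r = r_1 r_2$, I would iterate the prime case: first produce an $r_1$-splitting with $(r_1 - 1)m$ cuts, then concatenate each of the $r_1$ resulting groups of subintervals into a copy of $[0, 1]$ (correcting each $f_k$ by additive constants across the gaps so that it becomes continuous while preserving all increments) and apply the theorem with $r = r_2$ inside each group, using $(r_2 - 1)m$ further cuts per group. The total cut count is $(r_1 - 1)m + r_1(r_2 - 1)m = (r - 1)m = n$, and the pairs of labels give the desired $r_1 r_2 = r$ equal parts. The main obstacle will be this composite-$r$ bookkeeping: for prime $r$ the topological conclusion is essentially automatic once $K$ and $\Phi$ are in place, but the iteration requires some care in the reparametrization and additive corrections -- routine but tedious, which is presumably why the authors defer to Matou\v sek's exposition.
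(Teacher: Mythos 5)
Your proposal is correct and takes essentially the same approach as the paper: the space $K=[r]^{*(n+1)}$ you use is canonically isomorphic to the paper's $r$-fold deleted join $(\Delta_n)^{*r}_\Delta$ (this is the identification Matou\v sek makes explicit), and your test map, appeal to Dold's theorem for prime $r$, and reparametrize-and-iterate induction on prime factors for composite $r$ (with additive corrections to each $f_k$ and the cut count $(r_1-1)m + r_1(r_2-1)m = (r-1)m$) all match the paper's argument step for step.
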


Alon's theorem guarantees a fair splitting of measures $\mu_1, \dots, \mu_m$ on $[0,1]$ that are continuous in the sense that $\int_0^x \ d\mu_k$ is 
continuous in~$x$. We recover this case by setting $f_k(x) = \int_0^x \ d\mu_k$. The popular interpretation of Alon's theorem is that $r$
thieves have stolen a necklace with $m$ kinds of beads, whose densities along the necklace are given by $\mu_1, \dots, \mu_m$. Then the 
thieves can split the necklace with $(r-1)m$ cuts such that each thief receives an equal amount of each kind of bead.

We first need some notation before we can prove this result. By $W_r = \{(y_1, \dots, y_r) \in \R^r \ | \ \sum y_i = 0\}$
we denote the standard representation of the symmetric group~$S_r$. For abstract simplicial complexes $K$ and $L$ on disjoint
vertex sets denote their join by~${K * L}$, that is, the abstract simplicial complex whose faces are $\sigma \cup \tau$ with $\sigma \in K$
and $\tau \in L$. If we take the join of simplicial complexes whose vertex set is not disjoint to begin with, such as $K * K$, we first force 
the vertex sets to be disjoint. The $r$-fold, deleted join of $K$, denoted $K^{*r}_\Delta$, is the subcomplex of the $r$-fold join of $K$,
where unions of faces $\sigma_1, \dots, \sigma_r$ that were not pairwise disjoint to begin with have been deleted. We refer to
Matou\v sek~\cite{matousek2008} for details. 
Given two topological spaces $X$ and $Y$ with $G$-actions, we call a continuous map $f\colon X
\longrightarrow Y$ equivariant (or $G$-equivariant) if $f(g\cdot x) = g \cdot f(x)$ for all $x \in X$ and~${g \in G}$.

Matou\v sek~\cite[Theorem 6.6.1]{matousek2008} describes how points in the $r$-fold deleted join $(\Delta_n)^{*r}_\Delta$ of the $n$-simplex~$\Delta_n$
correspond to $n$ points $0 \le t_1 \le \dots \le t_n \le 1$ and partitions of $[n+1]$ into $r$ parts. We describe an alternative way of seeing this 
parametrization in the proof below. It follows from a theorem of Dold~\cite{dold1983} that for $n = (r-1)m$ and $r$ a prime,
any $S_r$-equivariant map $(\Delta_n)^{*r}_\Delta \longrightarrow W_r^{\oplus m}$ must include the origin in its image; 
see~\cite[Corollary 6.4.4]{matousek2008}.

\begin{proof}[Proof of Theorem~\ref{thm:splitting-necklaces}]
	First let $r \ge 2$ be a prime. We will induct on the number of prime divisors in the end.
	We first describe how points in the $r$-fold deleted join $(\Delta_n)^{*r}_\Delta$ of an $n$-simplex
	correspond to divisions of $[0,1]$ into $n+1$ (possibly empty) intervals, and a partition of those intervals into $r$ (possibly
	empty) parts. In the following we will identify the vertex set of $\Delta_n$ with~$[n+1]$.
	The simplicial complex $(\Delta_n)^{*r}_\Delta$ consists of joins $\sigma_1 * \dots * \sigma_r$ of $r$ pairwise
	disjoint faces $\sigma_i$ of the $n$-simplex~$\Delta_n$. A point in the geometric realization of $\sigma_1 * \dots * \sigma_r$ 
	corresponds to a convex combination $\lambda_1x_1 + \dots + \lambda_rx_r$ of points $x_i \in \sigma_i$. In particular, 
	$\lambda_i \ge 0$ and $\sum \lambda_i = 1$. 
	
	Let $\lambda_1x_1 + \dots + \lambda_rx_r$ be an arbitrary point in~$(\Delta_n)^{*r}_\Delta$. We can think of the expression
	$\lambda_1x_1 + \dots + \lambda_rx_r$ as a convex combination of points $x_i$ in the simplex~$\Delta_n$, and thus as 
	a point $x$ in the standard $n$-simplex $\Delta_n = \{(x^0, \dots, x^n) \in \R^{n+1} \ | \ x^i \ge 0 \ \text{and} \ \sum x^i = 1\}$.
	Such a point corresponds to a partition of $[0,1]$ into the $n+1$ intervals $[0,x^0],[x^0,x^0+x^1],\dots,[x^0+\dots+x^{n-1}, 1]$.
	Let $t_j$ denote $x^0+x^1+\dots+x^{j-1}$ for $j \in [n]$, $t_0 = 0$, and $t_{n+1} = 1$.
	The point $\lambda_1x_1 + \dots + \lambda_rx_r$ is in a join of pairwise disjoint faces $\sigma_1 * \dots * \sigma_r$, where
	$\sigma_i$ is the minimal supporting face of~$x_i$. To split the $n+1$ intervals into $r$ groups of intervals, let $j \in [n+1]$
	be in $T_i$ if and only if the $j$th vertex of $\Delta_n$ is contained in~$\sigma_i$ and~$\lambda_i > 0$.
	Notice that if $j$ is not contained in any $T_i$, then $t_j = t_{j-1}$ and we can add it to an arbitrary set~$T_i$.
	
	For each $i \in \{1, \dots, r\}$ define the continuous map
	$$F_i\colon (\Delta_n)^{*r}_\Delta \longrightarrow \R^m, 
	\lambda_1x_1 + \dots + \lambda_rx_r \mapsto \left(\sum_{j\in T_i} f_1(t_j)-f_1(t_{j-1}),  \dots, \sum_{j\in T_i} f_m(t_j)-f_m(t_{j-1})\right),$$
	and define $F\colon (\Delta_n)^{*r}_\Delta \longrightarrow (\R^m)^r$ by $F(x) = (F_1(x), \dots, F_r(x))$. There is an
	action by the symmetric group $S_r$ on $(\Delta_n)^{*r}_\Delta$ that permutes copies of~$\Delta_n$, and the map $F$
	is equivariant with respect to this action, where $S_r$ permutes the $F_i$ accordingly.
	
	Observe that if the theorem was false, then the image of $F$ would not map to the diagonal 
	$D = \{(y_1, \dots,y_r) \in (\R^m)^r \ | \ y_1 = \dots = y_r\}$.  Orthogonally projecting along the diagonal gives
	an equivariant map $\widehat F\colon (\Delta_n)^{*r}_\Delta \longrightarrow W_r^{\oplus m}$ that
	does not include the origin in its image. This is a contradiction to~\cite[Corollary 6.4.4]{matousek2008}.
	
	It remains to be shown that if the statement of the theorem holds for $r = q$ and $r = p$ then it also holds for their product
	$r = pq$. Let $[a_i,b_i] \subset [0,1]$, $i \in [\ell]$, be a collection of pairwise disjoint intervals. Denote their union 
	by~$I = \bigcup_i [a_i,b_i]$. Let $f_1, \dots, f_m \colon I \longrightarrow \R$ be continuous functions with $f_k(b_i) =f_k(a_{i+1})$
	for all $i \in [\ell-1]$ and all $k \in [m]$. Then the theorem holds in the same way for the functions $f_i$, since we can simply
	reparametrize to obtain continuous functions on all of~$[0,1]$. 
	
	Assume that we have shown the theorem for $r =p$ and $r =q$. Now given continuous maps $f_1, \dots, f_m \colon [0,1] \longrightarrow \R$,
	let $n = (p-1)m$. Find points $0 = t_0 \le t_1 \le \dots \le t_{n+1} = 1$ and a partition of the set $[n+1]$ into subsets $T_1, \dots, T_p$ such that
	$$\sum_{j\in T_1} f_k(t_j) - f_k(t_{j-1}) = \sum_{j\in T_2} f_k(t_j) - f_k(t_{j-1}) =  \dots = \sum_{j\in T_p} f_k(t_j) - f_k(t_{j-1}), \quad k = 1, \dots, m.$$
	
	The sum $\sum_{i=1}^p \sum_{j\in T_i} f_k(t_j) - f_k(t_{j-1})$ telescopes and is equal to $f_k(1) -f_k(0)$. Thus 
	$\sum_{j\in T_i} f_k(t_j) - f_k(t_{j-1}) = \frac{1}{p}(f_k(1) -f_k(0))$ for all $i$ and~$k$.
	Fix one set~$T_i$ and consider $I = \bigcup_{j \in T_i} \ [t_{j-1},t_j]$. Let $y$ be the left-most point in~$T_i$, and let $z$ be
	the right-most point in~$T_i$. Define $h_k \colon I \longrightarrow \R$ by 
	$h_k(x) = f_k(x) - f_k(t_{j-1}) + \sum f_k(t_s) - f_k(t_{s-1})$ if $x \in [t_{j-1},t_j]$, where the sum is taken over all $s \in T_i$ with $s < j$.
	The map $h_k$ is defined precisely in such a way that the value of $h_k$ at a right endpoint of an interval in~$I$ is equal to its
	value at the successive left endpoint of an interval in~$I$. Thus we can now split the maps $h_1, \dots, h_m$ for $r = q$.
	In this way we obtain a partition $T'_1, \dots, T'_q$ of $[(q-1)m+1]$ and points $y = t'_0 \le t'_1 \le \dots \le t'_{(q-1)m+1} = z$, 
	$t'_i \in I$ for $i \in [(q-1)m]$, such that
	$$\sum_{j\in T'_1} h_k(t'_{j}) - h_k(t'_{j-1}) 
	= \sum_{j\in T'_2} h_k(t'_{j}) - h_k(t'_{j-1}) =  \dots 
	= \sum_{j\in T'_q} h_k(t'_{j}) - h_k(t'_{j-1}), \quad k = 1, \dots, m.$$
	The sum $\sum_{i=1}^q \sum_{j\in T'_i} h_k(t'_{j}) - h_k(t'_{j-1})$ is equal to $h_k(z)-h_k(y)$. By definition of $h_k$ this
	is equal to $\sum_{j\in T_i} f_k(t_j) - f_k(t_{j-1}) = \frac{1}{p}(f_k(1) -f_k(0))$. Thus $\sum_{j\in T'_i} h_k(t'_{j}) - h_k(t'_{j-1}) = \frac{1}{pq}(f_k(1) -f_k(0))$
	for all~$i$ and~$k$. Now if $t'_{j-1}$ and $t'_j$ are in the same interval $[t_{\ell-1}, t_\ell]$, then $h_k(t'_{j}) - h_k(t'_{j-1}) 
	= f_k(t'_{j}) - f_k(t'_{j-1})$. Whereas if $t'_{j-1} \in [t_{\lambda-1}, t_\lambda]$ and $t'_j \in [t_{\ell-1}, t_\ell]$, then
	\begin{align*}
		&h_k(t'_{j}) - h_k(t'_{j-1}) \\
		&= f_k(t'_j) - f_k(t_{\ell-1}) + \left(\sum_{s < \ell, s \in T_i} f_k(t_s) - f_k(t_{s-1})\right) 
		- \left[f_k(t'_{j-1}) - f_k(t_{\lambda-1}) + \left(\sum_{s < \lambda, s \in T_i} f_k(t_s) - f_k(t_{s-1})\right)\right] \\
		&= f_k(t'_j) - f_k(t_{\ell-1}) + \left(\sum_{\lambda \le s < \ell, s \in T_i} f_k(t_s) - f_k(t_{s-1})\right) + f_k(t'_{j-1})  - f_k(t_{\lambda-1}).
	\end{align*}
	Let $T''$ be the set of points $\{t_0, \dots, t_{n+1}, t'_0, \dots, t'_{(q-1)m+1}\}$, and write $t''_0 < t''_1 < \dots < t''_N$ for the 
	points in~$T''$. Let $T''_i \subset [N]$ be the set of indices corresponding to points in $T'_i$ and for any pair of consecutive 
	points in $T'_i$ add those indices corresponding to all points of $\{t_0, \dots, t_{n+1}\}$ that are between them. 
	Then by the above calculations 
	$$\frac{1}{pq}(f_k(1) -f_k(0)) = \sum_{j\in T'_i} h_k(t'_{j}) - h_k(t'_{j-1}) = \sum_{j \in T''_i} f_k(t''_{j}) - f_k(t''_{j-1}).$$
	
	The total number of points required for this division is $(p-1)m+p(q-1)m = (pq-1)m$. This completes the induction on prime divisors.
\end{proof}

For the reader who found the induction on the number of prime divisors in the proof above difficult to follow, we mention that we use
Theorem~\ref{thm:splitting-necklaces} for all integers $r \ge 2$ only to show Theorem~\ref{thm:splitting-loops} in full generality. 
But the induction on prime divisors for this latter theorem is of much lower technical difficulty. 

To prove results about inscribing parallelograms and rectangles we need a Hobby--Rice theorem for maps defined on the circle~$S^1$. 
Consider the following first approximation to the desired result: For any $m \ge 2$ continuous maps $f_1, \dots, f_m\colon S^1 \longrightarrow \R$ 
one can find $m$ points $t_1, \dots, t_m \in S^1$ and a partition $T_1 \sqcup T_2$ of $[m]$ such that 
$\sum_{j\in T_1} f_k(t_j) - f_k(t_{j-1}) = \sum_{j\in T_2} f_k(t_j) - f_k(t_{j-1})$ for all~$k$. Here $t_0$ denotes~$t_m$. As stated this result 
trivially holds for $t_1 = t_2 = \dots = t_m$. To avoid this degeneracy we will cut the circle $S^1$ open at an arbitrary point to obtain
maps $f_i \colon [0,1] \longrightarrow \R$, and we will always require that at least one map, say~$f_m$, satisfies $f_m(0) \ne f_m(1)$,
that is, $f_m$ did not come from a map defined on~$S^1$. Then the above theorem holds true if $m$ is even (and is false for odd $m$ by
a degrees of freedom counting argument). We will mostly need the following special case:

\begin{corollary}
\label{cor:4maps}
	Let $f_1, \dots, f_4\colon [0,1] \longrightarrow \R$ be continuous functions.
	Then there are points $0 \le t_1 \le \dots \le t_4 \le 1$ such that
	$$2f_k(t_1)+2f_k(t_3)+f_k(1) = 2f_k(t_2) + 2f_k(t_4) + f_k(0) \quad \text{for all} \ k.$$
\end{corollary}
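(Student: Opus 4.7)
The plan is to derive Corollary~\ref{cor:4maps} from the Hobby--Rice theorem (Theorem~\ref{thm:hobby-rice}) at $n=4$, combined with a uniform approximation argument to pass from the $C^1$ case to merely continuous $f_k$. Setting $t_0:=0$ and $t_5:=1$, the displayed identity rearranges, via telescoping, to the alternating form
$$\sum_{j=1}^{5}(-1)^{j}\bigl[f_k(t_j)-f_k(t_{j-1})\bigr]=0\qquad(k=1,\dots,4),$$
in which the coefficients on $f_k(0), f_k(t_1), f_k(t_2), f_k(t_3), f_k(t_4), f_k(1)$ work out to $+1,-2,+2,-2,+2,-1$, matching those on the two sides of the corollary.

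If each $f_k$ were $C^1$, I would apply Hobby--Rice to the derivatives $f_1',\dots,f_4'\in L^1[0,1]$ with Lebesgue measure and $n=4$. The conclusion $\sum_j(-1)^j\int_{t_{j-1}}^{t_j}f_k'(s)\,ds=0$ becomes, via the fundamental theorem of calculus, exactly the telescoped identity above, and hence the corollary in the smooth case. For general continuous $f_k$, I would approximate each one uniformly on $[0,1]$ by a sequence of $C^1$ functions $f_k^{(N)}$ (e.g., by convolution with a mollifier). The smooth case yields points $t_1^{(N)}\le\dots\le t_4^{(N)}$ in $[0,1]$ satisfying the identity for each $N$; since the parameter set $\{(t_1,\dots,t_4)\in[0,1]^4:t_1\le t_2\le t_3\le t_4\}$ is compact, pass to a convergent subsequence with limits $(t_1,\dots,t_4)$. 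Uniform convergence together with continuity of $f_k$ gives $f_k^{(N)}(t_i^{(N)})\to f_k(t_i)$, so the identity persists at the limit.

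The main substance is packed into Hobby--Rice itself and, in particular, into its guarantee of an \emph{alternating} sign pattern $(-1)^j$, which is precisely what produces the coefficient structure of the corollary; note that this is stronger than what Theorem~\ref{thm:splitting-necklaces} would yield at $r=2,m=4$, since the latter only provides an unspecified bipartition of the five intervals rather than the alternating one. The only point requiring care in the plan is the limit argument, but uniform convergence plus continuity of each $f_k$ handles it cleanly, and the ordering $t_1\le t_2\le t_3\le t_4$ is automatically preserved under pointwise limits.
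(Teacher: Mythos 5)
Your proof is correct and takes a genuinely different route from the paper. The paper deduces the corollary from Theorem~\ref{thm:splitting-necklaces} at $r=2$, $m=4$, which only yields an unstructured bipartition $T_1 \sqcup T_2$ of $[5]$; the bulk of the paper's argument is a combinatorial reduction showing that any such bipartition can be converted to the alternating one $T_1 = \{1,3,5\}$, $T_2 = \{2,4\}$ by locating a pair of consecutive indices in the same block, discarding the intervening cut point, and re-indexing, iterating until the bipartition alternates. You bypass this reduction entirely by invoking the classical Hobby--Rice theorem, which already produces the alternating sign pattern $(-1)^j$ by fiat; you explicitly note this as the key structural advantage. The cost is that Hobby--Rice, as stated, applies to $L^1(d\mu)$ integrands rather than directly to continuous functions, so you pay with an approximation step: mollify each $f_k$ to a $C^1$ sequence, apply Hobby--Rice with $n=4$ to the derivatives $f_k'{}^{(N)}$ against Lebesgue measure, convert the integral identity to a telescoping difference identity via the fundamental theorem of calculus, and pass to a subsequential limit of the cut points using compactness of $\{0 \le t_1 \le \dots \le t_4 \le 1\}$, uniform convergence of $f_k^{(N)} \to f_k$, and continuity of each $f_k$. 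Both arguments are sound. Yours uses less of the paper's bespoke machinery and exploits the stronger structured conclusion of Hobby--Rice, at the price of a limiting argument; the paper's route avoids any limit precisely because Theorem~\ref{thm:splitting-necklaces} is formulated for continuous functions (that theorem being, in essence, the paper's own built-in replacement for the approximation step), but then must pay for the fact that its necklace-splitting statement offers no control over the partition.
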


\begin{proof}
	We use Theorem~\ref{thm:splitting-necklaces} with $r=2$ and $m=4$. This provides us with four points
	$0 \le t_1 \le \dots \le t_4 \le 1$ and a partition $T_1 \sqcup T_2$ of~$[5]$. If $T_1 = \{1,3,5\}$ and $T_2 = \{2,4\}$ (or vice versa)
	then the conclusion of Theorem~\ref{thm:splitting-necklaces} is equivalent to $2f_k(t_1)+2f_k(t_3)+f_k(1) = 2f_k(t_2) + 2f_k(t_4) + f_k(0)$.
	For any other partition of $[5]$ at least one of the $T_i$ has successive elements. Suppose $j$ and $j+1$ are in $T_1$ (say) 
	and they are the largest successive pair of numbers in the same~$T_i$. Swap $j+1$ into~$T_2$, $j+2$ into~$T_1$, and so on up to $j+\ell = 5$.
	Call the new partition of $[5]$ obtained in this way $T'_1 \sqcup T'_2$.
	Forget the point $t_j$ and reindex to get new points~$t'_i$ as follows: $t'_1 = t_1, \dots, t'_{j-1} = t_{j-1}$, $t'_j = t_{j+1}, \dots, t'_3 = t_4$,
	and $t'_4 = 1$. The equation $\sum_{j\in T_1} f_k(t_j) - f_k(t_{j-1}) = \sum_{j\in T_2} f_k(t_j) - f_k(t_{j-1})$ is
	equivalent to $\sum_{j\in T'_1} f_k(t'_j) - f_k(t'_{j-1}) = \sum_{j\in T'_2} f_k(t'_j) - f_k(t'_{j-1})$. So we can successively 
	reduce to the case $T_1 = \{1,3,5\}$ and $T_2 = \{2,4\}$.
\end{proof}

We can now prove Hadwiger's conjecture that any simple loop in $\R^3$ inscribes a parallelogram. In fact, any such loop inscribes
many parallelograms: their vertex sets are dense in the image of the loop. We consider four pairwise distinct points on a line to be a parallelogram
if they arise as the limit of a sequence of parallelograms, and Hadwiger~\cite{hadwiger1971} explicitly allows this.

\begin{theorem}
\label{thm:hadwiger}
	Any simple loop $\gamma \colon [0,1] \longrightarrow \R^3$ inscribes sufficiently many parallelograms that their vertex sets 
	are dense in~$\gamma([0,1])$.
\end{theorem}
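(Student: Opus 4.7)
The plan is to apply Corollary~\ref{cor:4maps} to the three coordinate functions of $\gamma$ together with a fourth auxiliary function $\phi$ chosen to localize one of the four split points near a prescribed parameter $s_0$. Writing $\gamma=(\gamma_1,\gamma_2,\gamma_3)$, the loop condition $\gamma(0)=\gamma(1)$ will automatically convert the three coordinate conclusions of Corollary~\ref{cor:4maps} into the parallelogram identity, while the fourth equation pins a split point near $s_0$. Ranging $s_0$ over $(0,1)$ then delivers density of the vertex set in $\gamma([0,1])$.

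Fix $s_0\in(0,1)$ and a small $\epsilon>0$ with $[s_0-\epsilon,s_0+\epsilon]\subset(0,1)$, and take $\phi_{s_0,\epsilon}\colon[0,1]\to\R$ to be the continuous piecewise-linear ramp equal to $0$ on $[0,s_0-\epsilon]$, equal to $1$ on $[s_0+\epsilon,1]$, and linearly interpolated in between. Applying Corollary~\ref{cor:4maps} to $(\gamma_1,\gamma_2,\gamma_3,\phi_{s_0,\epsilon})$ produces $0\le t_1\le t_2\le t_3\le t_4\le 1$ with
\[
    2\gamma_k(t_1)+2\gamma_k(t_3)+\gamma_k(1)=2\gamma_k(t_2)+2\gamma_k(t_4)+\gamma_k(0) \qquad (k=1,2,3).
\]
Using $\gamma(0)=\gamma(1)$, these three equations collapse into the vector identity $\gamma(t_1)+\gamma(t_3)=\gamma(t_2)+\gamma(t_4)$, which says that the diagonal midpoints of the quadrilateral $\gamma(t_1)\gamma(t_2)\gamma(t_3)\gamma(t_4)$ coincide, i.e.\ that this quadrilateral is a parallelogram inscribed in $\gamma$ (possibly degenerate in the sense Hadwiger permits).

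The fourth equation of the corollary, with $\phi_{s_0,\epsilon}(0)=0$ and $\phi_{s_0,\epsilon}(1)=1$, rearranges to
\[
    \phi_{s_0,\epsilon}(t_2)+\phi_{s_0,\epsilon}(t_4)-\phi_{s_0,\epsilon}(t_1)-\phi_{s_0,\epsilon}(t_3)=\tfrac{1}{2}.
\]
The key observation is that $\phi_{s_0,\epsilon}$ takes only the values $0$ and $1$ outside $[s_0-\epsilon,s_0+\epsilon]$, so if all four $t_i$ lay outside this window, the left-hand side would be an integer and could not equal $\tfrac{1}{2}$. Hence at least one $t_i$ must lie in $[s_0-\epsilon,s_0+\epsilon]$, and by uniform continuity of $\gamma$ the corresponding vertex $\gamma(t_i)$ of the inscribed parallelogram lies within the modulus-of-continuity distance $\omega_\gamma(\epsilon)$ of $\gamma(s_0)$. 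Letting $\epsilon\to 0$ while $s_0$ ranges over a countable dense subset of $(0,1)$ delivers a dense collection of parallelogram vertices in $\gamma([0,1])$; the point $\gamma(0)=\gamma(1)$ is picked up by taking $s_0\to 0$.

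The main obstacle I anticipate is degeneracy of the inscribed parallelogram. Since $\gamma$ is simple, distinct split points give distinct points on the loop (except through the identification $0\sim 1$), so degeneracy can arise only via coincidences $t_i=t_{i+1}$. A short case check shows that each such collapse forces the left-hand side of the $\phi$-equation to be $0$ or $1$ rather than $\tfrac{1}{2}$: for instance $t_1=t_2$ and $t_3=t_4$ yield $0$, while $t_2=t_3$ together with the simplicity-forced identification $t_1=0$, $t_4=1$ yields $1$. Hence these configurations are excluded by the very equation used to localize, and what remains is always a quadruple of pairwise distinct (possibly collinear) vertices, which is admissible under the degenerate convention built into the statement of the theorem. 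No external approximation scheme is required.
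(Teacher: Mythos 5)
Your proof is correct and is essentially the paper's own argument: you apply Corollary~\ref{cor:4maps} to the three coordinate functions of $\gamma$ together with a continuous ramp function supported near a chosen parameter, use $\gamma(0)=\gamma(1)$ to get the parallelogram identity, use the half-integer constraint from the ramp to force a split point into the window (hence density), and rule out degenerate coincidences by combining injectivity of $\gamma$ with the same ramp equation. The only cosmetic difference is that your ramp is centered symmetrically at $s_0$ with half-width $\epsilon$ rather than supported on a generic $[x,y]$, which changes nothing.
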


\begin{proof}
	Apply Corollary~\ref{cor:4maps} to the coordinate functions $\gamma_1, \gamma_2, \gamma_3$, of~$\gamma$, and to the function
	$$f(t) = \begin{cases} 0 & \text{ if } t\in [0,x]\\  \frac{1}{y-x}(t-x) 
	& \text{ if } t \in [x,y]\\ 1 & \text{ if } t\in [y,1]\end{cases}$$
	for a given interval $[x,y] \subset [0,1]$. Let $0 \le t_1 \le \dots \le t_4 \le 1$ be the points whose existence is guaranteed
	by Corollary~\ref{cor:4maps}.
	
	Since $\gamma$ is a loop, we have that $\gamma(0) = \gamma(1)$ and thus $\gamma(t_1)+\gamma(t_3) = \gamma(t_2)+\gamma(t_4)$.
	So the points $\gamma(t_1), \dots, \gamma(t_4)$ form a (possibly degenerate) parallelogram inscribed into~$\gamma$.
	Moreover, we know that $2f(t_1)+2f(t_3)+1 = 2f(t_2) + 2f(t_4)$. This does not have a solution where all $f(t_i)$ are integers. 
	Thus at least one $t_i$ is in the interval~$(x,y)$. Since this is true for any open interval $(x,y) \subset [0,1]$, we conclude that
	the set of vertices of inscribed parallelograms is dense in~$\gamma([0,1])$. 
	
	Lastly, we check that $f$ prevents the parallelogram from being degenerate. If $t_1 = t_2$, then $\gamma(t_1)+\gamma(t_3) = \gamma(t_2)+\gamma(t_4)$
	implies that $t_3=t_4$ since $\gamma$ is an embedding, but this directly contradicts $2f(t_1)+2f(t_3)+1 = 2f(t_2) + 2f(t_4)$. The case $t_2 = t_3$
	is similar.
\end{proof}

To prove results about inscribed rectangles, we need a lemma that distinguishes rectangles among parallelograms.
The British Flag Theorem states that if $ABCD$ are the vertices of a rectangle in a plane (in cyclic order) and $P \in \R^2$
is any point then $|PA|^2+|PC|^2 = |PB|^2 + |PD|^2$. We will need the converse of the British Flag Theorem:

\begin{lemma}
\label{lem:converse-british}
	Let $A,B,C,D \in \R^2$ be the vertices of a parallelogram in counterclockwise order. If there is a point $P \in \R^2$ such
	that $|PA|^2+|PC|^2 = |PB|^2 + |PD|^2$, then $ABCD$ is a rectangle.
\end{lemma}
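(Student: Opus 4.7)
The plan is to exploit the fact that in a parallelogram $ABCD$ the two diagonals share a common midpoint, and then reduce the hypothesis to a statement about the \emph{lengths} of the diagonals.

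First, let $M = \tfrac{1}{2}(A+C)$. Because $ABCD$ is a parallelogram we also have $M = \tfrac{1}{2}(B+D)$, so $M$ is simultaneously the midpoint of $AC$ and of $BD$. I would then invoke the median (Apollonius) identity for a point $P$ and a segment with endpoints $X, Y$ and midpoint $N$, namely $|PX|^2 + |PY|^2 = 2|PN|^2 + \tfrac{1}{2}|XY|^2$. This follows at once by writing $X = N+w$, $Y = N-w$ and expanding, so I would just state it and apply it twice: once to the pair $A,C$ and once to $B,D$, in both cases with the same midpoint $M$. This yields
$$|PA|^2+|PC|^2 \;=\; 2|PM|^2 + \tfrac{1}{2}|AC|^2, \qquad |PB|^2+|PD|^2 \;=\; 2|PM|^2 + \tfrac{1}{2}|BD|^2.$$
Subtracting, the hypothesis $|PA|^2+|PC|^2 = |PB|^2 + |PD|^2$ collapses to $|AC| = |BD|$. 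Note that the point $P$ disappears entirely, which is a good sanity check: the identity the lemma assumes at a single point actually turns out to be equivalent to a point-free condition on the parallelogram.

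It then remains to show that a parallelogram with diagonals of equal length is a rectangle. I would set $u = B-A$ and $v = D - A$ (so $C = A + u + v$ since $ABCD$ is a parallelogram), whence the diagonals are $AC = u+v$ and $BD = v-u$. The equality $|u+v|^2 = |v-u|^2$ expands immediately to $u \cdot v = 0$, so the adjacent sides $AB$ and $AD$ are perpendicular, making $ABCD$ a rectangle. The counterclockwise-ordering hypothesis is only used implicitly, to guarantee that the four points really do form a nondegenerate parallelogram with $AC$ and $BD$ as its two diagonals (rather than, say, with $AB$ and $CD$ playing that role). There is no serious obstacle; the only thing to be careful about is that the parallelogram hypothesis is essential, since for a general quadrilateral the equation $|PA|^2 + |PC|^2 = |PB|^2 + |PD|^2$ only forces $P$ to lie on a certain line and does not imply any rigidity of the quadrilateral itself.
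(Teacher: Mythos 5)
Your proof is correct and is essentially the paper's own argument in slightly different clothing: the paper places the origin at the common midpoint of the diagonals (so $A=-C$, $B=-D$) and expands $|P\pm A|^2$ and $|P\pm B|^2$ directly, which is exactly your Apollonius step, and then concludes from $|A|=|B|$ that the four vertices are concyclic about the center, which is your equal-diagonals-implies-rectangle step. No meaningful difference in approach.
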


\begin{proof}
	Choose coordinates with the intersection of the diagonals of the parallelogram at the origin. Thus $A = -C$ and $B = -D$,
	and $|P+A|^2 + |P-A|^2 = |P+B|^2 + |P-B|^2$. This is equivalent to $2|P|^2 + 2|A|^2 = 2|P|^2 + 2|B|^2$ and thus
	$|A|^2=|B|^2=|C|^2=|D|^2$, so $ABCD$ is a rectangle.
\end{proof}

We can now prove the existence of many inscribed rectangles. Recently and independently, Schwartz~\cite{schwartz2018}
proved a trichotomy for inscribed rectangles in planar loops showing that all but at most four points are the vertices of
inscribed rectangles.

\begin{theorem}
\label{thm:many-rectangles}
	Let $\gamma\colon [0,1] \longrightarrow \R^2$ be a simple loop. Then $\gamma$ 
	inscribes sufficiently many non-degenerate rectangles that the set of vertices is dense in~$\gamma([0,1])$.
\end{theorem}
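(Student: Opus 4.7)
The plan is to mirror the proof of Theorem~\ref{thm:hadwiger}, but in the plane, replacing the third coordinate function by a function that encodes the rectangle condition from Lemma~\ref{lem:converse-british}. Fix an arbitrary open subinterval $(x,y) \subset [0,1]$ and apply Corollary~\ref{cor:4maps} to the four continuous functions
\[
  f_1 = \gamma_1, \qquad f_2 = \gamma_2, \qquad f_3(t) = |\gamma(t)|^2, \qquad f_4 = f,
\]
where $f$ is the piecewise linear ramp associated with $(x,y)$ in the proof of Theorem~\ref{thm:hadwiger}. The corollary yields points $0 \le t_1 \le t_2 \le t_3 \le t_4 \le 1$ with $2f_k(t_1)+2f_k(t_3)+f_k(1) = 2f_k(t_2)+2f_k(t_4)+f_k(0)$ for each $k$.

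Since $\gamma(0) = \gamma(1)$, the identities from $f_1$ and $f_2$ collapse to the parallelogram relation $\gamma(t_1)+\gamma(t_3) = \gamma(t_2)+\gamma(t_4)$, just as in Hadwiger. The new ingredient is $f_3$: because $|\gamma(0)|^2 = |\gamma(1)|^2$, its identity simplifies to
\[
  |\gamma(t_1)|^2 + |\gamma(t_3)|^2 = |\gamma(t_2)|^2 + |\gamma(t_4)|^2,
\]
which is exactly the converse British flag hypothesis of Lemma~\ref{lem:converse-british} with the auxiliary point $P$ taken to be the origin. Provided the four points $\gamma(t_i)$ are the vertices of a genuine (non-degenerate) parallelogram, the lemma promotes it to a rectangle. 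The function $f_4$ plays the same density-forcing role as in Hadwiger: its equation $2f(t_1)+2f(t_3)+1 = 2f(t_2)+2f(t_4)$ has no solution with every $f(t_i) \in \{0,1\}$ by parity, so some $t_i$ lies in $(x,y)$; letting $(x,y)$ vary over arbitrary subintervals gives density of the vertex set in $\gamma([0,1])$.

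The main obstacle will be verifying non-degeneracy. Coincidences among the $\gamma(t_i)$ are handled exactly as in Hadwiger: by simplicity of $\gamma$, any coincidence forces either $t_i = t_j$ or $\{t_i,t_j\} = \{0,1\}$, and combining this with the parallelogram relation pairs the coincidences up and reduces the $f_4$-equation to a contradiction of the form $1 = 0$ or $1 = 2$. The genuinely new case is when the four points are pairwise distinct but collinear. Parametrising the containing line so that $\gamma(t_i) = O + s_i\vec v$, the parallelogram relation gives $s_1 + s_3 = s_2 + s_4$, and the cross terms in the $f_3$-identity cancel thanks to this relation, leaving $s_1^2 + s_3^2 = s_2^2 + s_4^2$. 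Together these two symmetric identities force $\{s_1,s_3\} = \{s_2,s_4\}$ as multisets, so two of the $\gamma(t_i)$ coincide after all, returning us to the previous case. With all coincidences and collinearities ruled out, Lemma~\ref{lem:converse-british} upgrades the parallelogram to a non-degenerate inscribed rectangle with a vertex in $(x,y)$, and the theorem follows.
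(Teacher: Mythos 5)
Your proof is essentially the paper's: apply Corollary~\ref{cor:4maps} to $\gamma_1$, $\gamma_2$, $|\gamma(\cdot)|^2$, and the ramp function $f$, use the $f$-equation to force a vertex into $(x,y)$ and to rule out coinciding vertices, and invoke Lemma~\ref{lem:converse-british} to upgrade the parallelogram to a rectangle. Your extra collinearity analysis (the linear terms cancel via $s_1+s_3=s_2+s_4$, leaving $s_1^2+s_3^2=s_2^2+s_4^2$ and hence $\{s_1,s_3\}=\{s_2,s_4\}$) is a correct and genuinely useful elaboration, since the paper's very terse proof appeals to Lemma~\ref{lem:converse-british} whose hypothesis presupposes a bona fide parallelogram and does not explicitly dispose of the collinear degeneration.
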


\begin{proof}
	Apply Corollary~\ref{cor:4maps} to the following functions: $\gamma_1, \gamma_2$, the function $f$
	from the proof of Theorem~\ref{thm:hadwiger}, and $g(t) = |\gamma(t)|^2$. Then the functions $\gamma_1$,
	$\gamma_2$, and $f$ guarantee that we obtain a non-degenerate inscribed parallelogram with at least
	one vertex in~$\gamma((x,y))$ for some arbitrary interval $(x,y) \subset [0,1]$. The function $g$ ensures
	that the parallelogram is actually a rectangle by Lemma~\ref{lem:converse-british}.
\end{proof}

\begin{example}
	In general we cannot prescribe a vertex of an inscribed rectangle precisely.
	Consider a curve $\gamma$ that traces a triangle. Then we cannot prescribe a vertex of an inscribed 
	rectangle to be a vertex of the triangle at an acute angle.
\end{example}

\section{Splitting rectifiable loops}

We started Section~\ref{sec:rectangles} by showing that the Hobby--Rice theorem implies that any planar $C^1$ loop
inscribes a parallelogram with one vertex at~$\gamma(0)$. We used Equation~(\ref{eq:der}) to ensure that the parallelogram
is non-degenerate. This equation more generally asserts that the length of $\gamma$ over the intervals $[0,a]$ and $[b,c]$
is equal to length over the intervals $[a,b]$ and~$[c,1]$. Thus $\gamma$ is cut into four pieces $\gamma|_{[0,a]}$,
$\gamma|_{[a,b]}$, $\gamma|_{[b,c]}$, and $\gamma|_{[c,1]}$ such that the pieces can be translated to form two loops
of equal length. In this section we extend this result to higher dimensions and splitting into more than two loops of equal
length. 

For the notion of length to be well-defined the loop $\gamma$ needs to be rectifiable.
A curve $\gamma\colon [0,1] \longrightarrow \R^d$ is called \emph{rectifiable} if there is a constant $C > 0$ such that
$$\sum_{j=1}^{n-1} {|\gamma(x_{j+1}) - \gamma(x_j)|} < C$$ for any $n$ and any set of points $x_1 < x_2 < \dots < x_n$ in~$[0,1]$.
In particular, the length of a rectifiable curve is well-defined. A rectifiable curve $\gamma\colon [0,1] \longrightarrow \R^d$ can 
be parametrized by arc length.

\begin{theorem}
\label{thm:splitting-loops}
	Let $\gamma\colon [0,1] \longrightarrow \R^d$ be a rectifiable loop.
	For an integer $r \ge 2$, let $n = (r-1)(d+1)$. Then there exists a partition of $[0,1]$ into $n+1$ intervals $I_1, \dots, I_{n+1}$ 
	by $n$ cuts and a partition of the index set $[n+1]$ into subsets $T_1, \dots, T_r$ such that the restrictions $\gamma|_{I_j}$, 
	$j \in T_k$, can be rearranged by translations to form a loop for each $k \in \{1, \dots, r\}$, and these $r$ loops all have the
	same length.
\end{theorem}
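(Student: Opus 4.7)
The plan is to apply Theorem~\ref{thm:splitting-necklaces} directly, using $d+1$ well-chosen continuous test functions on $[0,1]$: the $d$ coordinate functions of $\gamma$ together with the arc length function. This immediately gives $n = (r-1)(d+1)$ cuts and produces a partition of the resulting $n+1$ subintervals into $r$ groups satisfying both conditions simultaneously.

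More precisely, let $\gamma = (\gamma_1,\dots,\gamma_d)\colon [0,1] \longrightarrow \R^d$ be a rectifiable loop, and define
$$L(t) = \sup \sum_{j=1}^{k-1} |\gamma(x_{j+1}) - \gamma(x_j)|,$$
the supremum taken over all finite increasing sequences $0 = x_1 < x_2 < \dots < x_k = t$ in $[0,t]$. Because $\gamma$ is rectifiable, $L\colon [0,1] \longrightarrow \R$ is finite, nondecreasing, and continuous; this is the first small technical point I would verify (continuity of $L$ at $t$ follows from choosing a partition nearly achieving $L(t)$ and using continuity of $\gamma$). I would then set $m = d+1$ and apply Theorem~\ref{thm:splitting-necklaces} to the continuous functions $f_1 = \gamma_1, \dots, f_d = \gamma_d, f_{d+1} = L$, obtaining points $0 = t_0 \le t_1 \le \dots \le t_{n+1} = 1$ and a partition $T_1 \sqcup \dots \sqcup T_r = [n+1]$.

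For each $k \in \{1,\dots,m\}$, the common value
$$S_k := \sum_{j \in T_i} \bigl(f_k(t_j) - f_k(t_{j-1})\bigr)$$
is independent of $i$, and summing over $i$ yields $r \cdot S_k = f_k(1) - f_k(0)$ by telescoping. For $k \le d$ this gives $S_k = \tfrac{1}{r}(\gamma_k(1) - \gamma_k(0)) = 0$, since $\gamma$ is a loop; hence for each $i$ the vector displacements
$$\sum_{j \in T_i} \bigl(\gamma(t_j) - \gamma(t_{j-1})\bigr) = 0$$
vanish, so the pieces $\{\gamma|_{[t_{j-1},t_j]}\}_{j \in T_i}$ can be translated and concatenated in, say, increasing order of $j$ to close up into a loop. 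For $k = d+1$ the common value is $\tfrac{1}{r}L(1)$, which equals $\tfrac{1}{r}$ of the total length; since the length of $\gamma|_{[t_{j-1},t_j]}$ is exactly $L(t_j) - L(t_{j-1})$, each of the $r$ resulting loops has length $\tfrac{1}{r} L(1)$.

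There is no real obstacle beyond choosing the right test functions: the coordinates enforce that each group of pieces closes up under translation, and the arc length function enforces equality of total length. The only piece of analysis needed is the continuity of $L$, which is standard for rectifiable curves. I would close by remarking that this proof is a strict generalization of the $r=2$, $d=2$ argument sketched via the Hobby--Rice theorem at the start of Section~\ref{sec:rectangles}, with Equation~(\ref{eq:der}) being exactly the $f_{d+1}$-condition in that special case.
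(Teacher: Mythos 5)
Your proof is correct and is essentially the same as the paper's: both apply Theorem~\ref{thm:splitting-necklaces} to the $d$ coordinate functions of $\gamma$ together with an arc-length function, the only difference being that the paper reparametrizes $\gamma$ by arc length and takes $f(t)=t$, whereas you keep the original parametrization and use the cumulative length function $L(t)$ directly (which requires the same continuity fact the paper invokes implicitly when reparametrizing).
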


\begin{proof}	Parametrize $\gamma$ by arc length and apply Theorem~\ref{thm:splitting-necklaces} 
	to the $d$ coordinate functions $\gamma_1, \dots, \gamma_d$ and the function $f(t) = t$. Then
	$$\sum_{j\in T_1} \gamma(t_j) - \gamma(t_{j-1}) = \sum_{j\in T_2} \gamma(t_j) - \gamma(t_{j-1}) =  \dots 
	= \sum_{j\in T_r} \gamma(t_j) - \gamma(t_{j-1})$$
	implies that $\sum_{j\in T_i} \gamma(t_j) - \gamma(t_{j-1}) = 0$ for all $i \in [r]$. Thus the pieces 
	$\gamma|_{[t_{j-1},t_j]}$, $j \in T_i$, of $\gamma$ can be rearranged by translations to form a loop for each~$i \in [r]$.
	Moreover, $\sum_{j\in T_1} t_j - t_{j-1} = \sum_{j\in T_2} t_j - t_{j-1} =  \dots = \sum_{j\in T_r} t_j - t_{j-1}$ implies that
	these $r$ loops have the same length, since $\gamma$ is parametrized by arc length. 
\end{proof}	

In particular, for $r = 2$ and $d = 3$ Theorem~\ref{thm:splitting-loops} implies that any simple loop $\gamma$ in $\R^3$ inscribes
a parallelogram whose vertices cut $\gamma$ into four pieces $\gamma^{(1)}, \gamma^{(2)}, \gamma^{(3)}, \gamma^{(4)}$ in cyclic order such that
$\gamma^{(1)}$ and $\gamma^{(3)}$ have the same total length as $\gamma^{(2)}$ and~$\gamma^{(4)}$. 

Theorem~\ref{thm:many-rectangles}
asserts that any simple planar loop inscribes many rectangles. While we have been unable to use this to derive Toeplitz' conjecture
that one of these rectangles is a square, we can use similar reasoning to that used in the proof of Theorem~\ref{thm:splitting-loops}
to ensure that the length of the loop over pairs of opposite sides of the rectangle is the same. That is, instead of the sides of the
rectangle itself having the same length, we can only ensure this for the pieces of the loop over those sides.

\begin{theorem}
\label{thm:inscribed}
	Let $\gamma\colon [0,1] \longrightarrow \R^2$ be a simple rectifiable loop.
	The loop $\gamma$ inscribes a non-degenerate rectangle cutting it into four pieces $\gamma^{(1)}, \gamma^{(2)}, 
	\gamma^{(3)}, \gamma^{(4)}$ in cyclic order such that $\gamma^{(1)}$ and $\gamma^{(3)}$ have the same total length as
	$\gamma^{(2)}$ and~$\gamma^{(4)}$.
\end{theorem}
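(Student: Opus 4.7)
The plan is to follow the blueprint of Theorem~\ref{thm:many-rectangles}, but replace the auxiliary piecewise linear function used there to prescribe a vertex by the arc-length function $f(t)=t$. Corollary~\ref{cor:4maps} accepts exactly four continuous test functions, and the four conditions we must impose match this budget perfectly: two coordinate equations to force a parallelogram, one British Flag identity to upgrade the parallelogram to a rectangle via Lemma~\ref{lem:converse-british}, and one equation which will simultaneously balance the lengths of opposite pairs of arcs and prevent the rectangle from degenerating.

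First I would reparametrize $\gamma$ so that $\gamma\colon [0,1]\longrightarrow \R^2$ has constant speed $L$, where $L$ is the total length of the loop; then every subarc $\gamma|_{[a,b]}$ has length $L(b-a)$. Next I would apply Corollary~\ref{cor:4maps} to the four functions $\gamma_1$, $\gamma_2$, $g(t)=|\gamma(t)|^2$, and $f(t)=t$, obtaining points $0\le t_1\le t_2\le t_3\le t_4\le 1$. Using $\gamma(0)=\gamma(1)$ and $g(0)=g(1)$, the coordinate equations collapse to $\gamma(t_1)+\gamma(t_3)=\gamma(t_2)+\gamma(t_4)$, so the four points form a parallelogram, and the $g$-equation becomes $|\gamma(t_1)|^2+|\gamma(t_3)|^2=|\gamma(t_2)|^2+|\gamma(t_4)|^2$, which by Lemma~\ref{lem:converse-british} (applied with $P$ the origin) forces this parallelogram to be a rectangle.

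The $f$-equation reads $2t_1+2t_3+1=2t_2+2t_4$, i.e.\ $t_2+t_4-t_1-t_3=\tfrac12$. A direct rearrangement shows this is precisely the identity $(t_2-t_1)+(t_4-t_3)=(t_3-t_2)+(1-t_4)+t_1$, which after multiplication by $L$ is exactly the desired length balance on the four pieces $\gamma^{(1)}=\gamma|_{[t_1,t_2]}$, $\gamma^{(2)}=\gamma|_{[t_2,t_3]}$, $\gamma^{(3)}=\gamma|_{[t_3,t_4]}$, and $\gamma^{(4)}=\gamma|_{[t_4,1]}\cup\gamma|_{[0,t_1]}$ taken in cyclic order.

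The main obstacle I anticipate is non-degeneracy, since the budget of test functions has been used up and no dedicated degeneracy-prevention function is available. The saving observation is that $f(t)=t$ plays the dual role: if $t_1=t_2$, then embeddedness of $\gamma$ together with the parallelogram identity forces $t_3=t_4$, but then the $f$-equation collapses to $1=0$; the case $t_3=t_4$ is symmetric, and any other coincidence among the $t_i$ is ruled out the same way. Thus arc-length positivity supplies the non-degeneracy witness for free.
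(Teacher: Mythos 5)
Your proof is correct and takes essentially the same route as the paper: after parametrizing by arc length, you apply Corollary~\ref{cor:4maps} to $\gamma_1$, $\gamma_2$, $g(t)=|\gamma(t)|^2$, and $f(t)=t$, with the $g$-equation plus Lemma~\ref{lem:converse-british} upgrading the parallelogram to a rectangle and the $f$-equation encoding the length balance. The only difference is that you spell out the non-degeneracy argument (that $f(t)=t$ simultaneously rules out coincident $t_i$'s via the parallelogram identity and injectivity), which the paper leaves implicit.
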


\begin{proof}
	Parametrize $\gamma$ by arc length.
	Use Corollary~\ref{cor:4maps} for $\gamma_1$, $\gamma_2$, $g(t) = |\gamma(t)|^2$, and $f(t) = t$.
	The first three functions ensure a (possibly degenerate) inscribed rectangle, while $f$ guarantees that
	the total length of $\gamma^{(1)}$ and $\gamma^{(3)}$ is equal to that of $\gamma^{(2)}$ and~$\gamma^{(4)}$.
\end{proof}

\section{Necklace splittings with additional constraints}
\label{sec:necklace-constraints}

In this section we prove a proper strengthening of Alon's necklace splitting result for $r$ a prime. This in turn yields a strengthened
loop splitting result, provided that the number of resulting loops $r$ is a prime. We find it noteworthy that for these results the usual
induction on the number of prime divisors seems to fail entirely. We are unable to derive similar results for non-primes~$r$. In fact,
a result of Blagojevi\'c, Matschke, and Ziegler~\cite{blagojevic2015} implies that the topological method used in the proof fails outside 
of the case that $r$ is a prime. In light of the recent counterexamples to the topological Tverberg conjecture for parameters that are 
not prime powers~\cite{blagojevic2018, frick2015, mabillard2015}, this opens the interesting question of whether the primality of $r$ 
is perhaps not an artifact of our proof method, but actually an essential prerequisite of our result. 

Generalizations of Theorem~\ref{thm:splitting-necklaces} of various kinds have recently received much attention; see for example 
de Longueville and \v Zivaljevi\'c~\cite{deLongueville2008}, Karasev, Rold\'an-Pensado, and Sober\'on~\cite{karasev2016}, Alishahi 
and Meunier~\cite{alishahi2017}, Asada et al.~\cite{asada2017}, and Blagojevi\'c and Sober\'on~\cite{blagojevic2017}. Here we
show the following:

\begin{theorem}
\label{thm:opt-splitting-necklaces}
	Let $f_1, \dots, f_m\colon [0,1] \longrightarrow \R$ be continuous functions. For a prime $r \ge 2$ let~${n = (r-1)m}$.
	Let $C_1, \dots, C_\ell$ be a partition of $[n+1]$ with $|C_i| \le r-1$.
	Then there are points $0 = t_0 \le t_1 \le \dots \le t_{n+1} = 1$ and a partition of the index
	set $[n+1]$ into subsets $T_1, \dots, T_r$ with $|C_i \cap T_j| \le 1$ for every $i$ and~$j$ such that
	$$\sum_{j\in T_1} f_k(t_j)-f_k(t_{j-1}) = \sum_{j\in T_2} f_k(t_j)-f_k(t_{j-1}) =  \dots = \sum_{j\in T_r} f_k(t_j)-f_k(t_{j-1}), \quad k = 1, \dots, m.$$
\end{theorem}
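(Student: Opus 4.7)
The plan is to run the configuration-space/test-map argument from the proof of Theorem~\ref{thm:splitting-necklaces}, but with the deleted join $(\Delta_n)^{*r}_\Delta$ replaced by the subcomplex that encodes the rainbow constraint, and with the topological input of \cite[Corollary 6.4.4]{matousek2008} replaced by the topological heart of the optimal colored Tverberg theorem of Blagojevi\'c, Matschke, and Ziegler~\cite{blagojevic2015}. As emphasized in the paragraph preceding the theorem statement, this last ingredient is only available when $r$ is prime, so the primality hypothesis enters essentially and no induction on the prime divisors of $r$ is possible.

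First I set up the configuration space. Let $K$ be the subcomplex of $(\Delta_n)^{*r}_\Delta$ consisting of joins $\sigma_1*\cdots*\sigma_r$ of pairwise disjoint faces $\sigma_i\subseteq[n+1]$ such that each $\sigma_i$ meets every color class $C_k$ in at most one vertex. Every face of $(\Delta_n)^{*r}_\Delta$ corresponds to a partial function $f\colon[n+1]\to[r]$ via $\sigma_i=f^{-1}(i)$, and the rainbow condition becomes: the restriction of $f$ to each $C_k$ is injective. Since partial injections from a $k$-element set to $[r]$ are precisely the faces of the chessboard complex $\Delta_{k,r}$ of partial matchings in $K_{k,r}$, and since the $C_k$ partition $[n+1]$, we obtain an $S_r$-equivariant identification
$$K \ \cong \ \Delta_{|C_1|,r}*\Delta_{|C_2|,r}*\cdots*\Delta_{|C_\ell|,r},$$
where $S_r$ acts diagonally by permuting the second coordinate in each chessboard factor. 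In particular $\dim K = \sum_k(|C_k|-1) + (\ell-1) = n = (r-1)m$, exactly as required. As in the proof of Theorem~\ref{thm:splitting-necklaces}, a point $\lambda_1x_1+\cdots+\lambda_rx_r\in|K|$ encodes cut points $0=t_0\le t_1\le\cdots\le t_{n+1}=1$ together with a partial rainbow assignment of $[n+1]$ into $r$ parts $T_1,\ldots,T_r$.

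Now define
$$F_i(\lambda_1x_1+\cdots+\lambda_rx_r) \ = \ \Bigl(\sum_{j\in T_i}\bigl(f_k(t_j)-f_k(t_{j-1})\bigr)\Bigr)_{k=1}^m$$
and $F=(F_1,\ldots,F_r)\colon|K|\to(\R^m)^r$, which is $S_r$-equivariant with $S_r$ permuting the $F_i$. If the theorem were false, the image of $F$ would miss the thin diagonal $D$, and orthogonal projection along $D$ would produce an $S_r$-equivariant map $\widehat F\colon|K|\to W_r^{\oplus m}$ avoiding the origin. The main topological statement behind the optimal colored Tverberg theorem of~\cite{blagojevic2015} asserts that, for $r$ prime, no such map exists: every $S_r$-equivariant map $\Delta_{|C_1|,r}*\cdots*\Delta_{|C_\ell|,r}\to W_r^{\oplus m}$ has $0$ in its image. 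This is proved in~\cite{blagojevic2015} by combining the shellability (and resulting high connectivity) of each chessboard complex $\Delta_{|C_i|,r}$ with $|C_i|\le r-1$ with an equivariant index calculation in which the primality of $r$ is precisely what makes the relevant Fadell--Husseini / Serre spectral sequence argument go through.

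Finally, a preimage of $0$ under $\widehat F$ yields cut points $t_j$ together with a partial rainbow partition of $[n+1]$ satisfying the desired sum equalities. The only cleanup needed is to absorb the indices that are left unassigned: each such $j$ has $t_{j-1}=t_j$ and so contributes $f_k(t_j)-f_k(t_{j-1})=0$ to every sum, and since each $|C_k|\le r-1<r$ the partial injection $C_k\to[r]$ coming from the preimage can be extended to a (not necessarily surjective) injection by placing each leftover index of $C_k$ into a still-unused part. This produces a full partition $T_1,\ldots,T_r$ of $[n+1]$ with $|C_k\cap T_j|\le 1$ for all $k,j$, as required. The main obstacle is the topological identification of $|K|$ with the join of chessboard complexes and the invocation of~\cite{blagojevic2015}; this is exactly where the primality of $r$ is used, and where any hope of extending the theorem to composite $r$ by the present method breaks down.
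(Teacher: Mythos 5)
Your overall strategy -- building a constrained configuration space inside $(\Delta_n)^{*r}_\Delta$, identifying it as a join of chessboard complexes, and running the test-map argument against an equivariant-map nonexistence result from~\cite{blagojevic2015} -- is exactly the approach the paper takes. The identification $K \cong \Delta_{|C_1|,r}*\cdots*\Delta_{|C_\ell|,r}$, the construction of $F$ and $\widehat F$, the dimension count, and the cleanup of unassigned indices at the end all match, and your note that each leftover index of $C_k$ can be placed in a still-unused part because $|C_k|\le r-1<r$ is a nice touch that is more careful than the corresponding remark in the paper.

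However, there is a genuine gap in the central step. You invoke, as if it were already established in~\cite{blagojevic2015}, the claim that \emph{every} $S_r$-equivariant map $\Delta_{|C_1|,r}*\cdots*\Delta_{|C_\ell|,r}\to W_r^{\oplus m}$ (with $r$ prime, $|C_i|\le r-1$, $\sum|C_i| = n+1$, $n=(r-1)m$) must have a zero. That general form is not what~\cite{blagojevic2015} proves. The topological lemma they establish (Lemma~\ref{lem:equivariant-map} in this paper) is only for the very specific join $(\Delta_{r,r-1})^{*m}*[r]$, i.e.\ when every chessboard factor has full size $r-1$ and there is one extra $[r]$ factor. To get from arbitrary color-class sizes $|C_i|\le r-1$ down to this special shape one needs a genuine reduction, and that is exactly what the paper's Lemma~\ref{lem:special-case} does, by padding the problem with $\ell-m$ dummy functions $h_{m+1},\dots,h_\ell$ supported on auxiliary intervals and enlarging each $C_i$ to have exactly $r-1$ elements. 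Your proposal skips this step entirely, and so the argument as written rests on an unproven generalization of Lemma~\ref{lem:equivariant-map}.

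A related point: your stated justification for the general claim -- ``shellability (and resulting high connectivity) of each chessboard complex $\Delta_{|C_i|,r}$ with $|C_i|\le r-1$'' -- is not correct. For $r\ge 4$ and $k$ close to $r-1$, the chessboard complex $\Delta_{k,r}$ is \emph{not} $(k-2)$-connected; the known connectivity is $\min\bigl(k,\,r,\,\lfloor (k+r+1)/3\rfloor\bigr)-2$, which is strictly less than $k-2$ in that range. The reason the equivariant nonexistence result holds anyway, for $r$ prime, is that the Fadell--Husseini index of these chessboard complexes is still as large as their dimension would suggest despite the missing connectivity -- the ``indomitability'' phenomenon of Vre\'cica and \v Zivaljevi\'c~\cite{vrecica2011} -- and this is proved by an explicit index computation, not by a connectivity argument. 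So even if the general statement you need were true in full generality (it may well be), your proposed reason for believing it is wrong, and the safe route -- which the paper takes -- is to reduce combinatorially to the case that~\cite{blagojevic2015} actually handles.
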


In the usual interpretation of Alon's result, where $[0,1]$ is thought of as an unclasped necklace with $m$ types of beads whose
density along the necklace is given by $\mu_1, \dots, \mu_m$ and the sets $T_i$ are thieves who would like to split the necklace
fairly, the result above guarantees that there are blocks of size at most $r-1$ pieces of the necklace such that no thief receives 
two pieces of the necklace within such a block.

Compare Theorem~\ref{thm:opt-splitting-necklaces} with the following optimal colored Tverberg theorem of Blagojevi\'c, Matschke, and Ziegler.

\begin{theorem}[Blagojevi\'c, Matschke, and Ziegler~\cite{blagojevic2015}]
	Let $r \ge 2$ be a prime and $d \ge 1$ be an integer. Let $n =(r-1)(d+1)$, and let $C_1, \dots, C_\ell$ be a partition of the
	vertex set of the $n$-simplex~$\Delta_n$ with~$|C_i|\le r-1$ for all~$i$. Then for any continuous map $f \colon \Delta_n \longrightarrow \R^d$ 
	there are $r$ pairwise disjoint faces $\sigma_1, \dots, \sigma_r$ of $\Delta_n$ such that each $\sigma_i$ has at most one vertex in
	each $C_j$ and with $f(\sigma_1) \cap \dots \cap f(\sigma_r) \ne \emptyset$. 
\end{theorem}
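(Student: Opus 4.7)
The plan is to apply the configuration-space/test-map scheme with a rainbow-constrained chessboard configuration space. Let $K$ denote the simplicial complex on vertex set $[n+1]$ whose faces are the \emph{rainbow} subsets $\tau$, meaning $|\tau \cap C_j| \le 1$ for every $j$. Structurally, $K$ is the simplicial join $C_1 * C_2 * \cdots * C_\ell$ of the discrete vertex sets $C_j$. Candidate $r$-tuples of pairwise disjoint rainbow faces $(\sigma_1, \dots, \sigma_r)$ of $\Delta_n$ are parametrized by the $r$-fold deleted join $K^{*r}_\Delta$, equipped with the $S_r$-action that cyclically permutes the $r$ join copies.

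The key structural identity is that the deleted join distributes over the simplicial join,
$$K^{*r}_\Delta \;\cong\; (C_1)^{*r}_\Delta * (C_2)^{*r}_\Delta * \cdots * (C_\ell)^{*r}_\Delta,$$
and each factor $(C_j)^{*r}_\Delta$ is precisely the chessboard complex $\Delta_{|C_j|, r}$ of partial matchings in $K_{|C_j|, r}$. Under the hypotheses $|C_j| \le r - 1$ and $\sum_j |C_j| = n + 1 = (r-1)(d+1) + 1$, I would invoke the sharp connectivity bounds for chessboard complexes (the Bj\"orner--Lov\'asz--Vre\'cica--\v Zivaljevi\'c bound together with the refined shellability analysis, which shows $\Delta_{r-1, r}$ is $(r-3)$-connected) and combine them via the join-connectivity formula to establish that $K^{*r}_\Delta$ is at least $((r-1)(d+1) - 1)$-connected. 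The constraint $|C_j| \le r - 1$ is tight: it is what makes each chessboard factor match the Tverberg dimension, which is why the theorem is labelled \emph{optimal}.

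For the test map, augment $f \colon \Delta_n \longrightarrow \R^d$ to $\tilde f = (f, 1) \colon \Delta_n \longrightarrow \R^{d+1}$ and define ${F \colon K^{*r}_\Delta \longrightarrow W_r^{\oplus (d+1)}}$ by evaluating $\tilde f$ affinely on each of the $r$ join coordinates and then projecting orthogonally to the zero-sum subspace of $(\R^{d+1})^r$. The extra constant coordinate is a standard device ensuring that any zero of $F$ simultaneously forces $\lambda_1 = \cdots = \lambda_r = 1/r$ (from the constant slot) and $f(y_1) = \cdots = f(y_r)$, which is precisely the conclusion of the theorem with the intersection point equal to that common value. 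If the theorem failed, $F$ would avoid the origin and deformation-retract equivariantly to the sphere $S(W_r^{\oplus (d+1)})$ of dimension $(r-1)(d+1) - 1$.

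Finally, restrict the action to the cyclic subgroup $\Z/r \le S_r$. For $r$ prime this action is free on $K^{*r}_\Delta$ (any fixed point would force two of the minimal supporting faces $\sigma_i$ to coincide, violating the deleted-join condition) and free on the target sphere. Since the source connectivity matches the sphere's dimension, Dold's theorem~\cite{dold1983}, used exactly as in the proof of Theorem~\ref{thm:splitting-necklaces}, rules out such a $\Z/r$-equivariant map, producing the desired contradiction. The main obstacle is the connectivity computation of the second paragraph: one needs the sharp shellability-based bound for $\Delta_{r-1,r}$ rather than just the naive Bj\"orner--Lov\'asz--Vre\'cica--\v Zivaljevi\'c estimate, together with careful bookkeeping across join factors for arbitrary admissible partitions. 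This is also the exact point at which the primality of $r$ becomes essential --- for composite $r$ the cyclic action develops non-trivial isotropy and the Dold-style obstruction argument fails, paralleling the known counterexamples to the topological Tverberg conjecture discussed in Section~\ref{sec:necklace-constraints}.
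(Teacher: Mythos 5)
Your configuration space $K^{*r}_\Delta \cong \Delta_{|C_1|,r} * \cdots * \Delta_{|C_\ell|,r}$, the augmented test map $\tilde f=(f,1)$, and the appeal to freeness of $\Z/r$ for $r$ prime are all correct and standard; note, however, that the paper does not prove this theorem at all --- it quotes it from~\cite{blagojevic2015} and imports only its topological core as Lemma~\ref{lem:equivariant-map}, which it then uses (together with the reduction of Lemma~\ref{lem:special-case}) to prove Theorem~\ref{thm:opt-splitting-necklaces}. The genuine gap in your argument is the connectivity claim in your second paragraph, and it is exactly the point that kept this ``optimal'' bound open for two decades. The chessboard complex $\Delta_{r-1,r}$ is \emph{not} $(r-3)$-connected once $r\ge 4$: the Bj\"orner--Lov\'asz--Vre\'cica--\v Zivaljevi\'c bound gives connectivity $\nu-2$ with $\nu=\min\bigl(r-1,\,r,\,\lfloor 2r/3\rfloor\bigr)=\lfloor 2r/3\rfloor$, Shareshian and Wachs showed this bound is sharp in general (the bottom nonvanishing homology typically carries $3$-torsion), and concretely $\Delta_{3,4}$ is a triangulated torus, hence connected but not simply connected, contradicting the $1$-connectivity your claim would require. (Ziegler's shellability theorem for chessboard complexes needs $n\ge 2m-1$ and so never applies to $\Delta_{r-1,r}$ for $r>3$; there is no ``refined shellability analysis'' giving $(r-3)$-connectivity.) The join formula then yields only $(d+1)\lfloor 2r/3\rfloor-1$-connectivity for $(\Delta_{r-1,r})^{*(d+1)}*[r]$, which falls short of the $(r-1)(d+1)-1$ needed for Dold's theorem for every prime $r\ge 5$. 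Your scheme does go through verbatim for $r=2$ and $r=3$, where $\Delta_{1,2}\cong S^0$ and $\Delta_{2,3}\cong S^1$ are spheres of the right dimension.

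This is precisely why Blagojevi\'c, Matschke, and Ziegler do not argue via connectivity. They first reduce to the case of $d+1$ color classes of size exactly $r-1$ plus one singleton class (the analogue of Lemma~\ref{lem:special-case}), so the configuration space is the $n$-dimensional free $\Z/r$-pseudomanifold $(\Delta_{r-1,r})^{*(d+1)}*[r]$, and then prove the nonexistence of the equivariant map into $W_r^{\oplus(d+1)}\setminus\{0\}$ by a degree/equivariant-obstruction computation: the single obstruction is evaluated on the fundamental class by counting, with signs, the colored Tverberg partitions of an explicit point configuration, and Wilson's theorem ($(r-1)!\equiv -1 \bmod r$) shows the count is nonzero modulo~$r$. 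To complete your proof you would have to replace the connectivity step with such a computation. One smaller correction: for composite $r$ the cyclic action on the deleted join is still free (no nontrivial power of an $r$-cycle fixes an element of $[r]$); what fails is freeness of $\Z/r$ on the target sphere $S(W_r^{\oplus(d+1)})$ and, more fundamentally, the obstruction computation itself, as discussed in~\cite{blagojevic2015}.
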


To prove Theorem~\ref{thm:opt-splitting-necklaces} we combine the central topological result of~\cite{blagojevic2015} with Matou\v sek's proof
of Theorem~\ref{thm:splitting-necklaces} and a combinatorial reduction to a special case; see Lemma~\ref{lem:special-case}.
The complex $[n]^{*m}_\Delta$ denoted $\Delta_{n,m}$ is called the chessboard complex. Here $[n]$ denotes
the $0$-dimensional simplicial complex on vertex set~$[n]$. The symmetric group $S_n$ naturally acts on~$[n]$, and the subgroup
$\Z/n$ acts by shifts. Thus these groups act diagonally on joins and deleted joins of these complexes, in particular, on chessboard
complexes~$\Delta_{n,m}$. 

We can now state the central topological lemma needed for the proof of Theorem~\ref{thm:opt-splitting-necklaces}. See also 
Vre\'cica and \v Zivaljevi\'c~\cite{vrecica2011}.

\begin{lemma}[Blagojevi\'c, Matschke, and Ziegler~\cite{blagojevic2015}]
\label{lem:equivariant-map}
	Let $r \ge 2$ be a prime, $m \ge 1$ and integer, and $n = (r-1)m$. Then any $\Z/r$-equivariant map 
	$(\Delta_{r,r-1})^{*m} * [r] \longrightarrow W_r^{\oplus m}$ must have a zero.
\end{lemma}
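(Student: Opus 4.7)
The plan is to invoke the Fadell--Husseini ideal-valued equivariant index with $\mathbb{F}_r$ coefficients for $G=\mathbb{Z}/r$. I would $G$-equivariantly retract the target $W_r^{\oplus m}\setminus\{0\}$ onto the unit sphere $S(W_r^{\oplus m})$ and then witness the nonexistence of an equivariant map $X:=(\Delta_{r,r-1})^{*m}*[r]\to S(W_r^{\oplus m})$ by exhibiting a cohomology class lying in $\mathrm{Ind}_G(S(W_r^{\oplus m}))$ but not in $\mathrm{Ind}_G(X)$, using the standard fact that any equivariant map $X\to Y$ forces the inclusion $\mathrm{Ind}_G(Y)\subseteq\mathrm{Ind}_G(X)$.

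First I would check that $G$ acts freely on $X$, so that the index computations are sharp. Cyclic shift on $[r]$ is visibly free. On each factor $\Delta_{r,r-1}=[r]^{*(r-1)}_\Delta$, a geometric point is supported on vertices drawn from at most $r-1$ copies of $[r]$, using one vertex per copy used; since a nontrivial cyclic shift on $[r]$ has only orbits of length $r$, no such support can be shift-invariant, and joins of free $G$-spaces remain free.

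Second, I would compute the three relevant indices. For the target, standard Euler-class arguments give $\mathrm{Ind}_G(S(W_r^{\oplus m}))=(e(W_r^{\oplus m}))$, a principal ideal generated in degree $n=(r-1)m$ of $H^*(BG;\mathbb{F}_r)$; the generator is nonzero precisely because $r$ is prime. For the domain I would import the fundamental chessboard index calculation of~\cite{blagojevic2015}, namely $\mathrm{Ind}_G(\Delta_{r,r-1})=(e(W_r))$, and combine it with $\mathrm{Ind}_G([r])=\mathfrak{m}$ (the full augmentation ideal, since $[r]$ is a free $G$-orbit) via multiplicativity of the Fadell--Husseini index under joins of free $G$-spaces. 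This yields
$$\mathrm{Ind}_G(X) \;=\; (e(W_r))^{m}\cdot \mathfrak{m} \;=\; (e(W_r^{\oplus m}))\cdot\mathfrak{m},$$
whose nonzero elements all sit in degrees strictly greater than $n$.

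The conclusion then follows immediately: the generator $e(W_r^{\oplus m})\in \mathrm{Ind}_G(S(W_r^{\oplus m}))$ lives in degree $n$, so for degree reasons cannot lie in $\mathrm{Ind}_G(X)$, precluding any $G$-equivariant map $X\to S(W_r^{\oplus m})$. The main obstacle in the plan is the chessboard complex index computation $\mathrm{Ind}_G(\Delta_{r,r-1})=(e(W_r))$: the containment ``$\supseteq$'' is routine from any equivariant map $\Delta_{r,r-1}\to S(W_r)$ together with naturality of the index, but the matching upper bound requires a careful spectral sequence analysis of the Borel construction of the chessboard complex, and it is here that the primality of $r$ enters essentially. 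For composite $r$ the Euler class $e(W_r)$ becomes nilpotent in $H^*(BG;\mathbb{F}_r)$ in a way that collapses the argument, paralleling the failure of the topological Tverberg conjecture at parameters that are not prime powers.
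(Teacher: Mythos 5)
The paper does not actually prove this lemma: it is imported verbatim from Blagojevi\'c, Matschke, and Ziegler~\cite{blagojevic2015} and used as a black box, so there is no ``paper's own proof'' to hold your argument against. That said, your Fadell--Husseini sketch tracks the methodology of~\cite{blagojevic2015} reasonably faithfully, and you correctly identify the two genuine technical ingredients: freeness of the $\Z/r$-action on the join, and the chessboard index computation $\mathrm{Ind}_{\Z/r}(\Delta_{r,r-1}) = H^{\ge r-1}(B\Z/r;\mathbb{F}_r)$, where the ``hard'' inclusion $\subseteq$ is the indomitability result and the place where primality is used.

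The one step you state too casually is the equality
\[
\mathrm{Ind}_G\bigl((\Delta_{r,r-1})^{*m}*[r]\bigr) \;=\; \mathrm{Ind}_G(\Delta_{r,r-1})^{m}\cdot\mathrm{Ind}_G([r]),
\]
which you attribute to ``multiplicativity of the Fadell--Husseini index under joins of free $G$-spaces.'' The containment that is automatic from the general theory is $\mathrm{Ind}_G(A*B) \supseteq \mathrm{Ind}_G(A)\cdot\mathrm{Ind}_G(B)$, and that is the wrong direction here: your degree argument requires the \emph{upper} bound $\mathrm{Ind}_G(X) \subseteq H^{\ge n+1}(BG;\mathbb{F}_r)$, i.e.\ the reverse inclusion. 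For $G=\Z/2$ with $\mathbb{F}_2$ coefficients the reverse inclusion does hold because $H^*(B\Z/2;\mathbb{F}_2)$ is a polynomial ring with all ideals principal, but for odd primes $H^*(B\Z/p;\mathbb{F}_p)\cong \mathbb{F}_p[x]\otimes\Lambda[y]$ has nilpotents and the na\"ive multiplicativity can fail. What~\cite{blagojevic2015} actually do is not a formal join-multiplicativity argument at all; they run the Serre spectral sequence of the Borel construction of the whole join directly, and the essential input is not merely the index of a single chessboard complex but the structure of $\widetilde H_{r-2}(\Delta_{r,r-1};\Z)$ as a $\Z[\Z/r]$-module (for $r$ prime it is $\Z[\Z/r]$-free modulo the norm element). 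This permanence of a top homology class through the spectral sequence is what delivers the upper bound on the index; it cannot be recovered solely from $\mathrm{Ind}_G(\Delta_{r,r-1})=(e(W_r))$. So your outline is morally right and lands on the correct cited facts, but the gap between ``$\supseteq$'' and ``$\subseteq$'' in your index-of-a-join identity is exactly where the real content of the BMZ argument lives, and it needs either a justification of multiplicativity modulo nilpotents in the $\Z/p$ case or the direct spectral-sequence computation as in the source.

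Separately, one cannot run the argument on connectivity alone: $\Delta_{r,r-1}$ is only $\bigl(\lfloor 2r/3\rfloor-2\bigr)$-connected, far short of $(r-3)$-connectivity for larger $r$, so Dold's theorem does not apply; the ``index behaves as if the complex were $(r-3)$-connected'' phenomenon is precisely the indomitability you cite, and it is indispensable.
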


The following lemma is analogous to a reduction in~\cite{blagojevic2015} for Tverberg-type results.

\begin{lemma}
\label{lem:special-case}
	It is sufficient to prove Theorem~\ref{thm:opt-splitting-necklaces} in the case that $\ell = m+1$, $|C_i| = r-1$ for $i < \ell$
	and $|C_\ell| = 1$.
\end{lemma}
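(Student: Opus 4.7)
My plan is to reduce from an arbitrary partition to the special form via a padding argument that enlarges both the index set and the collection of functions.

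First, I would note that $\ell \ge m+1$ by the pigeonhole bound $\sum|C_i| = (r-1)m+1$ combined with $|C_i|\le r-1$. Given a general partition $C_1,\ldots,C_\ell$, I would pad each $C_i$ with $r-1-|C_i|$ fictitious \emph{dummy} indices to form $\tilde C_i$ of size exactly $r-1$, and adjoin a further singleton dummy block $\tilde C_{\ell+1}$. The resulting partition of the enlarged index set $[(r-1)\ell+1]$ lies in the special form, now with $\ell$ playing the role of $m$. I would correspondingly supplement the original $m$ functions with $\ell-m$ constant (dummy) functions, so that the hypothesized special case applies to this enlarged instance.

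Applying the hypothesized special case yields a subdivision $0 = s_0 \le \cdots \le s_{(r-1)\ell+1} = 1$ and a partition $T'_1,\ldots,T'_r$ of the enlarged index set satisfying $|\tilde C_i \cap T'_j|\le 1$ and the equal-sum condition for all $\ell$ augmented functions. To descend to the original instance, I would restrict to real indices by setting $T_j := T'_j \cap [n+1]$ under the inclusion of the real indices into the enlarged set. Since $C_i \subseteq \tilde C_i$, the constraint $|C_i \cap T_j|\le 1$ is inherited; and since the dummy functions are constants, the equal-sum condition for the real functions $f_k$ is preserved.

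The main obstacle is the discrepancy in the number of subdivision intervals: the special case produces $(r-1)\ell+1$ intervals, while the theorem demands exactly $n+1 = (r-1)m+1$. I would reconcile this by arranging the $(r-1)(\ell-m)$ dummy indices to correspond to degenerate intervals ($s_{j-1}=s_j$), so they can be collapsed without disturbing either the partition constraint or the equal-sum condition. The technical crux is ensuring that such a degenerate configuration emerges compatibly with the output of the special case, exploiting the flexibility granted by the weak inequalities $t_{j-1} \le t_j$ in the statement of the theorem; this part of the argument is analogous to the corresponding reduction in~\cite{blagojevic2015}, where duplicated points at coincident locations play the role of dummy degenerate intervals here.
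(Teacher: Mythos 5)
Your plan of padding with constant dummy functions has a genuine gap at exactly the place you flag as the ``technical crux.'' If the supplementary functions $h_{m+1},\dots,h_\ell$ are constants, the equal-sum condition for them is vacuous, and the hypothesized special case gives you no control whatsoever over where the extra $(r-1)(\ell-m)$ cut points $s_j$ land. There is no mechanism to force those cuts to coincide and produce degenerate intervals; the theorem outputs the subdivision, you do not get to choose it. The analogy with the reduction in~\cite{blagojevic2015} does not apply as you invoke it: there the extra points are part of the \emph{input} configuration and can be placed at coincident locations by fiat, whereas here the $t_j$ are part of the \emph{output}, and constant dummy functions impose nothing on it.

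The paper closes this gap with a different padding. Instead of constants, it reparametrizes the real beads so that all their variation lives on $[0,\tfrac12]$ (via $h_i(x)=f_i(2x)$ for $x\le\tfrac12$, constant afterward), and then introduces $\ell-m$ ramp functions $h_{m+1},\dots,h_\ell$, each increasing from $0$ to $1$ on a pairwise-disjoint interval $[a_{i},b_{i}]\subset[\tfrac12,1]$. To split the $i$-th dummy bead fairly among $r$ thieves, at least $r-1$ of the cut points must fall in $[a_i,b_i]$. Summing over the $\ell-m$ dummy beads forces $(r-1)(\ell-m)$ cut points into $[\tfrac12,1]$, leaving at most $n=(r-1)m$ cut points in $[0,\tfrac12]$; restricting to those and undoing the reparametrization gives a valid solution for the original instance with the correct number of intervals. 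In short: you need non-trivial dummy functions that ``soak up'' the excess cuts; constant ones do not.
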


\begin{proof}
	We are given continuous functions $f_1, \dots, f_m\colon [0,1] \longrightarrow \R$, a prime $r \ge 2$, and~${n = (r-1)m}$.
	Let $C_1, \dots, C_\ell$ be a partition of $[n+1]$ with $|C_i| \le r-1$. Certainly $\ell$ is larger than~$m$. We define $N$
	to be the integer~${(r-1)\ell}$, and we enlarge the sets $C_i$ and add the new set $C_{\ell+1}' = \{N+1\}$ to be a partition of~$[N+1]$.
	More precisely, obtain $C'_i$ from $C_i$ by adding $r-1-|C_i|$ elements in $[N] \setminus [n+1]$; this can be done in 
	such a way that $C'_1, \dots, C_{\ell+1}'$ is a partition of~$[N+1]$.
	
	Define the functions $h_1, \dots, h_m \colon [0,1] \longrightarrow \R$ by $h_i(x) = f_i(2x)$ for $x \le \frac12$ and $h_i(x) = f_i(1)$
	for $x > \frac12$. Let $[a_1,b_1], \dots, [a_{\ell-m}, b_{\ell-m}]$ be pairwise disjoint intervals in $[\frac12, 1]$. Define $\ell-m$ new functions 
	$h_{m+1}, \dots, h_\ell \colon [0,1] \longrightarrow \R$ by $h_i(x) = 0$ for $x < a_{i-m}$, $h_i(x) = 1$ for $x > b_{i-m}$, and interpolate
	linearly in between, that is, $h_i(x) = \frac{1}{b_{i-m}-a_{i-m}}(x-a_{i-m})$ for $x \in [a_{m-i}, b_{m-i}]$.
	When we assume that Theorem~\ref{thm:opt-splitting-necklaces} has been
	shown for $|C'_i| = r-1$ for $i \le \ell$ and $|C'_{\ell+1}| = 1$, then we can find points $0 = t_0 \le t_1 \le \dots \le t_{N+1} = 1$ 
	and a partition $T_1, \dots, T_r$ of $[N+1]$ such that 
	$$\sum_{j\in T_1} h_k(t_j)-h_k(t_{j-1}) = \sum_{j\in T_2} h_k(t_j)-h_k(t_{j-1}) =  \dots = \sum_{j\in T_r} h_k(t_j)-h_k(t_{j-1}), \quad k = 1, \dots, m$$
	and $|C'_i \cap T_j| \le 1$ for each $i$ and~$j$.
	
	Of the points $t_i$ at least $r-1$ points need to be in each interval~$[a_i,b_i]$, which requires $(r-1)(\ell-m)$ points in total.
	Thus at most $(r-1)m$ points $t_i$ are contained in the interval~$[0, \frac12]$. But then 
	$$\sum_{j\in T_1} f_k(2t_j)-f_k(2t_{j-1}) = \sum_{j\in T_2} f_k(2t_j)-f_k(2t_{j-1}) =  \dots = \sum_{j\in T_r} f_k(2t_j)-f_k(2t_{j-1}), \quad k = 1, \dots, m$$
	for those points~$t_i$, proving the general case of Theorem~\ref{thm:opt-splitting-necklaces}.
\end{proof}

\begin{proof}[Proof of Theorem~\ref{thm:opt-splitting-necklaces}]
	By the reduction of Lemma~\ref{lem:special-case} we only need to consider the case that $\ell = m+1$ with $|C_1| = \dots = |C_{\ell-1}| = r-1$
	and~${|C_\ell| = 1}$, which we will do from here on. We construct the $S_r$-equivariant map
	$F\colon (\Delta_n)^{*r}_\Delta \longrightarrow (\R^m)^r$ as in the proof of Theorem~\ref{thm:splitting-necklaces}.
	Since we identified the vertex set of $\Delta_n$ with $[n+1]$ each set $C_i$ is a subset of the vertex set of the $n$-simplex,
	and thus $(C_1 * \dots * C_\ell)^{*r}_\Delta$ is an $S_r$-invariant subcomplex of~$(\Delta_n)^{*r}_\Delta$. A point
	$x \in (C_1 * \dots * C_\ell)^{*r}_\Delta$ precisely corresponds to points $0 = t_0 \le t_1 \le \dots \le t_{n+1} = 1$ and 
	a partition $T_1, \dots, T_r$ of $[n+1]$ as in the statement of the theorem. Observe that if the theorem was false,
	then the image of $F$ restricted to $(C_1 * \dots * C_\ell)^{*r}_\Delta$ would not intersect the diagonal 
	$D = \{(y_1, \dots,y_r) \in (\R^m)^r \ | \ y_1 = \dots = y_r\}$.  Orthogonally projecting along the diagonal gives
	an equivariant map $\widehat F\colon (C_1 * \dots * C_\ell)^{*r}_\Delta \longrightarrow W_r^{\oplus m}$ that
	does not map to zero. 
	
	Now since $|C_1| = \dots = |C_{\ell-1}| = r-1$ and~${|C_\ell| = 1}$ and since joins and deleted joins commute the 
	complex $(C_1 * \dots * C_\ell)^{*r}_\Delta$ is isomorphic to $([r-1]^{*r}_\Delta)^{*(\ell-1)} * [1]^{*r}_\Delta \cong 
	(\Delta_{r,r-1})^{*(\ell-1)} * [r]$. Thus $\widehat F$ contradicts Lemma~\ref{lem:equivariant-map}.
\end{proof}

The same topological machinery fails for non-primes $r$; see Blagojevi\'c, Matschke, and Ziegler~\cite{blagojevic2015}. 
In the same way that Theorem~\ref{thm:splitting-loops} follows from Theorem~\ref{thm:splitting-necklaces}, we can derive
the following corollary from Theorem~\ref{thm:opt-splitting-necklaces}.

\begin{corollary}
\label{cor:opt-splitting-loops}
	Let $\gamma\colon [0,1] \longrightarrow \R^d$ be a rectifiable loop. For a prime $r \ge 2$, let $n = (r-1)(d+1)$. 
	Let $C_1, \dots, C_m$ be a partition of~$[n+1]$ with~$|C_i| \le r-1$. Then there exists a partition of $[0,1]$ into $n+1$ 
	intervals $I_1, \dots, I_{n+1}$ by $n$ cuts and a partition of the index set $[n+1]$ into subsets $T_1, \dots, T_r$ 
	with $|C_i \cap T_k| \le 1$ such that the restrictions $\gamma|_{I_j}$, $j \in T_k$, can be rearranged by translations to form a loop for 
	each $k \in \{1, \dots, r\}$, and these $r$ loops all have the same length.
\end{corollary}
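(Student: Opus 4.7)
The plan is to imitate the passage from Theorem~\ref{thm:splitting-necklaces} to Theorem~\ref{thm:splitting-loops}, but invoke the stronger necklace splitting of Theorem~\ref{thm:opt-splitting-necklaces} in place of Theorem~\ref{thm:splitting-necklaces} so as to inherit the additional combinatorial constraint $|C_i \cap T_k| \le 1$. Since $\gamma$ is rectifiable, I first reparametrize it by arc length, so that the total length of any restriction $\gamma|_{[a,b]}$ equals $b-a$ times the total length of $\gamma$.

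Next I apply Theorem~\ref{thm:opt-splitting-necklaces} to the $d+1$ continuous functions $\gamma_1, \dots, \gamma_d$ and $f(t)=t$ on $[0,1]$, taking the role of $m$ in that theorem to be $d+1$. The hypothesis $n=(r-1)(d+1)$ coincides with the statement of the corollary, and the prescribed partition $C_1, \dots, C_m$ of $[n+1]$ satisfies $|C_i|\le r-1$, so the hypotheses are met. The theorem produces points $0 = t_0 \le t_1 \le \dots \le t_{n+1} = 1$ and a partition $T_1, \dots, T_r$ of $[n+1]$ with $|C_i \cap T_k| \le 1$ for every $i$ and $k$, and such that the sums
$$\sum_{j \in T_k} \gamma_i(t_j) - \gamma_i(t_{j-1}) \quad \text{and} \quad \sum_{j \in T_k} t_j - t_{j-1}$$
are independent of $k \in [r]$ for every $i \in [d]$.

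Now I read off the desired conclusions. Because the sum over $k$ of $\sum_{j\in T_k}\gamma_i(t_j)-\gamma_i(t_{j-1})$ telescopes to $\gamma_i(1)-\gamma_i(0)=0$ (as $\gamma$ is a loop), the common value of these sums must be $0$ for each coordinate. Equivalently, $\sum_{j\in T_k}(\gamma(t_j)-\gamma(t_{j-1}))=0$ in $\R^d$, so the pieces $\gamma|_{[t_{j-1},t_j]}$ with $j\in T_k$ can be translated and concatenated to form a closed loop. Similarly, $\sum_{j\in T_k}(t_j-t_{j-1})$ telescopes to $1$, so the common value is $1/r$; since $\gamma$ is parametrized by arc length, each of the $r$ loops has length equal to $1/r$ of the total length of $\gamma$. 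Setting $I_j = [t_{j-1},t_j]$ gives the partition into $n+1$ intervals cut by $n$ cuts, and the inherited constraint $|C_i \cap T_k| \le 1$ passes directly from the necklace splitting to the loop splitting.

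The main obstacle is entirely absorbed into Theorem~\ref{thm:opt-splitting-necklaces}; once that result is available, the corollary is a mechanical parallel of Theorem~\ref{thm:splitting-loops}'s proof. The only point requiring a small amount of care is the role played by the extra function $f(t)=t$: it is what forces the length-equality of the resulting $r$ loops, and without it one would only get the translation-reassembly statement with no control on lengths.
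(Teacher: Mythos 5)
Your proof matches the paper's intended argument exactly: the paper states that Corollary~\ref{cor:opt-splitting-loops} follows from Theorem~\ref{thm:opt-splitting-necklaces} in the same way that Theorem~\ref{thm:splitting-loops} follows from Theorem~\ref{thm:splitting-necklaces}, which is precisely what you carry out (arc-length reparametrization, apply the constrained necklace splitting to $\gamma_1,\dots,\gamma_d$ and $f(t)=t$, telescope to get the common sum equal to zero for the coordinates and $1/r$ for the lengths, and inherit $|C_i \cap T_k| \le 1$). The argument is correct and complete.
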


\begin{question}
	Is the condition that $r$ is a prime actually required in Theorem~\ref{thm:opt-splitting-necklaces} and Corollary~\ref{cor:opt-splitting-loops}?
\end{question}

\section*{Acknowledgements}

This research was performed during the Summer Program for Undergraduate Research 2017 at Cornell
University. The authors are grateful for the excellent research conditions provided by the program. The
authors would like to thank Maru Sarazola for many insightful conversations. The authors would also like 
to thank Camil Muscalu, Phil Sosoe, and Gennady Uraltsev for clarifying discussions.


\end{document}